\numberwithin{equation}{section}
\newtheorem{thm}{Theorem}[section]
\newtheorem{cor}[thm]{Corollary}
\newtheorem{lem}[thm]{Lemma}
\newtheorem{prop}[thm]{Proposition}
\theoremstyle{definition}
\newtheorem{df}[thm]{Definition}
\newtheorem{const}[thm]{Construction}
\newcommand{\A}{\mathbb{A}}
\newcommand{\N}{\mathbb{N}}
\renewcommand{\P}{\mathbb{P}}
\newcommand{\Z}{\mathbb{Z}}
\newcommand{\cA}{\mathcal{A}}
\newcommand{\cC}{\mathcal{C}}
\newcommand{\cD}{\mathcal{D}}
\newcommand{\cF}{\mathcal{F}}
\newcommand{\cG}{\mathcal{G}}
\newcommand{\cO}{\mathcal{O}}
\newcommand{\rD}{\mathrm{D}}
\newcommand{\rL}{\mathrm{L}}
\newcommand{\rN}{\mathrm{N}}
\newcommand{\sT}{\mathscr{T}}
\DeclareMathOperator{\Hom}{Hom}
\DeclareMathOperator{\Spec}{Spec}
\newcommand{\id}{\mathrm{id}}
\newcommand{\ul}{\underline}
\newcommand{\ol}{\overline}
\newcommand{\Sm}{\mathrm{Sm}}
\newcommand{\lSm}{\mathrm{lSm}}
\newcommand{\SmlSm}{\mathrm{SmlSm}}
\newcommand{\lSch}{\mathrm{lSch}}
\newcommand{\Adm}{\mathrm{Adm}}
\newcommand{\Bl}{\mathrm{Bl}}
\newcommand{\Gr}{\mathrm{Gr}}
\newcommand{\Sp}{\mathrm{Sp}}
\newcommand{\PSh}{\mathrm{PSh}}
\newcommand{\Sh}{\mathrm{Sh}}
\newcommand{\Fun}{\mathrm{Fun}}
\newcommand{\cofib}{\mathrm{cofib}}
\newcommand{\tcofib}{\mathrm{tcofib}}
\newcommand{\colim}{\mathop{\mathrm{colim}}}
\newcommand{\logKGL}{\mathbf{logKGL}}
\newcommand{\KGL}{\mathbf{KGL}}
\newcommand{\eff}{\mathrm{eff}}
\newcommand{\SH}{\mathrm{SH}}
\newcommand{\logSH}{\mathrm{logSH}}
\newcommand{\unit}{\mathbf{1}}
\newcommand{\logDM}{\mathrm{logDM}}
\newcommand{\DM}{\mathrm{DM}}
\title{On the log motivic stable homotopy groups}
\author{Doosung Park}
\address{Department of Mathematics and Informatics, University of Wuppertal, Germany}
\email{dpark@uni-wuppertal.de}
\subjclass[2020]{Primary 14F42; Secondary 14A21}
\keywords{log motives, motivic homotopy theory, log motivic stable homotopy groups}
\begin{document}
\begin{abstract}
We compare the log motivic stable homotopy category and the usual motivic stable homotopy category over a perfect field admitting resolution of singularities.
As a consequence,
we show that the log motivic stable homotopy groups are isomorphic to the usual motivic stable homotopy groups.
\end{abstract}

\maketitle

\section{Introduction}

For a perfect field $k$,
author's joint work \cite{logDM} with Binda-{\O}stv{\ae}r provides a stable $\infty$-category of log effective motives $\logDM^{\eff}(k)$,
which is a place for incorporating various non $\A^1$-invariant cohomology like Hodge cohomology into the motivic framework.
For every separated noetherian log smooth fs log scheme $X\in \lSm/k$,
the log motive $M(X)\in \logDM^{\eff}(k)$ can be associated.
Using this,
one can also define log motivic cohomology of $X$.

A natural question in such a non $\A^1$-invariant theory of motives is whether $\logDM^{\eff}(k)$ contains Voevodsky's stable $\infty$-category of motives $\DM^\eff(k)$ \cite{MVW} as a full subcategory or not.
An affirmative answer implies that the theory of log motives is an extension of the usual theory of motives.
This is answered in \cite[Theorem 8.2.16]{logDM} under the assumption of resolution of singularities on $k$.
With this assumption,
a consequence is that log motivic cohomology of $Y\in \lSm/k$ is isomorphic to the usual motivic cohomology of $Y-\partial Y\in \Sm/k$,
where $Y-\partial Y$ denotes the largest open subscheme of $Y$ with the trivial log structure.
The proof of this result relies on Voevodsky's theory of homotopy invariant sheaves with transfers \cite[\S 21-24]{MVW}.

In author's joint work \cite{logSH} with Binda-{\O}stv{\ae}r,
the log motivic stable $\infty$-category $\logSH(k)$ is introduced.
Again,
a natural question is whether $\logSH(k)$ contains the stable motivic $\infty$-category $\SH(k)$ of Morel-Voevodsky \cite{MV} as a full subcategory or not.
We show the following result using a different approach that does not rely on the theory of sheaves with transfers:

\begin{thm}
\label{omega.6}
Let $k$ be a perfect field admitting resolution of singularities.
There exist adjoint functors
\[
\omega_\sharp
:
\logSH(k)
\rightleftarrows
\SH(k)
:
\omega^* 
\]
satisfying the following properties:
\begin{enumerate}
\item[\textup{(1)}]
$\omega_\sharp (\Sigma_{\P^1}^\infty Y_+)\simeq \Sigma_{\P^1}^\infty (Y-\partial Y)_+$ for $Y\in \lSm/k$.
\item[\textup{(2)}]
$\omega^*(\Sigma_{\P^1}^\infty (Y-\partial Y)_+)\simeq \Sigma_{\P^1}^\infty Y_+$ for proper $Y\in \lSm/k$.
\item[\textup{(3)}]
$\omega_\sharp$ preserves colimits and is monoidal.
\item[\textup{(4)}]
$\omega^*$ preserves colimits and is monoidal and fully faithful.
\item[\textup{(5)}]
The essential image of $\omega^*$ is the full subcategory spanned by $\A^1$-local objects of $\logSH(k)$.
\end{enumerate}
\end{thm}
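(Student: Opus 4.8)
The plan is to produce both functors from a single functor between the underlying sites and then keep track of the two localizations.

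\textbf{Step 1 (the functors on presheaves).} Sending $Y$ to the largest open subscheme $Y-\partial Y$ on which the log structure is trivial defines a functor $\omega\colon\lSm/k\to\Sm/k$: a morphism of fs log schemes carries the trivial‑log‑structure locus into the trivial‑log‑structure locus, since the structure map of a log structure detects units. This $\omega$ is symmetric monoidal for the cartesian monoidal structures and has a fully faithful right adjoint $\lambda\colon\Sm/k\to\lSm/k$, $X\mapsto(X,\mathrm{triv})$, with $\omega\lambda\simeq\id$. Passing to presheaves of spectra, restriction along $\omega$ gives $\omega^{*}$, whose left adjoint $\omega_{\sharp}$ is left Kan extension along $\omega$; by construction $\omega_{\sharp}(\Sigma_{\P^1}^{\infty}Y_+)\simeq\Sigma_{\P^1}^{\infty}(Y-\partial Y)_+$. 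The key observation is $\omega^{*}\simeq\mathrm{Lan}_{\lambda}$, which holds because for $Y\in\lSm/k$ the comma category of $\lambda$ over $Y$ is the slice $(\Sm/k)_{/(Y-\partial Y)}$ and a slice category has a terminal object. Hence $\omega^{*}$ is \emph{also} a left adjoint (to $\lambda^{*}$), so it preserves colimits and, being a left Kan extension along the monoidal $\lambda$, is strong symmetric monoidal for Day convolution; moreover $\omega_{\sharp}\omega^{*}\simeq\mathrm{Lan}_{\omega\lambda}\simeq\id$.

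\textbf{Step 2 (descent to the motivic localizations).} I would check that $\omega^{*}$ carries motivically local presheaves on $\Sm/k$ to log‑motivically local presheaves on $\lSm/k$: a strict Nisnevich cover restricts under $\omega$ to a Nisnevich cover, and an admissible blow‑up restricts to an isomorphism since its center lies in the boundary, so $\omega^{*}$ of a Nisnevich‑local object satisfies dividing–Nisnevich descent; one has $\omega(Y\times\square)=(Y-\partial Y)\times\A^1$ with $\square=(\P^1,\infty)$, so $\A^1$‑invariance of the target becomes $\square$‑invariance of $\omega^{*}(-)$; and $\omega_{\sharp}$ monoidal sends the log Tate object to the Tate object, matching the two stabilizations. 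Thus the adjunction descends to $\omega_{\sharp}\colon\logSH(k)\rightleftarrows\SH(k)\colon\omega^{*}$, and the presheaf identity $\omega_{\sharp}\omega^{*}\simeq\id$ survives localization, yielding at once item (1), item (3), and the full faithfulness in item (4).

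\textbf{Step 3 ($\A^1$‑local objects, properness, and the upgrades).} For $G\in\SH(k)$ one has $(\omega^{*}G)(Y\times\A^1)=G((Y-\partial Y)\times\A^1)\simeq G(Y-\partial Y)=(\omega^{*}G)(Y)$, so $\omega^{*}$ lands in the $\A^1$‑local subcategory; the substantive point is that every $\A^1$‑local $H$ lies in its essential image, i.e.\ the unit $H\to\omega^{*}\omega_{\sharp}H$ is an equivalence. Here resolution of singularities enters through a dévissage: first, any $Y\in\lSm/k$ admits an open immersion into a proper log smooth $\overline Y$ with strict normal crossing boundary, and peeling the boundary strata off $\overline Y$ by localization triangles (each stratum again proper log smooth of lower dimension) shows that the $\A^1$‑local subcategory is generated under colimits by the $\Sigma_{\P^1}^{\infty}Y_+$ with $Y$ proper log smooth; second, for such proper $Y$ the object $\Sigma_{\P^1}^{\infty}Y_+$ is \emph{already} $\A^1$‑invariant, proved by a further induction on the number of boundary components reducing a smooth normal crossing pair $(\overline X,D)$ to the trivial‑log‑structure case and invoking the blow‑up and projective bundle formulas in $\SH(k)$. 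The second point is exactly item (2): $\omega^{*}\Sigma_{\P^1}^{\infty}(Y-\partial Y)_+\simeq\omega^{*}\omega_{\sharp}\Sigma_{\P^1}^{\infty}Y_+\simeq\Sigma_{\P^1}^{\infty}Y_+$ for $Y$ proper. Combined with the first point, the essential image of the fully faithful $\omega^{*}$ then contains a generating set of the $\A^1$‑local subcategory, hence equals it, which is item (5). Finally, the $\A^1$‑localization of $\logSH(k)$ is the localization inverting the single map $\Sigma_{\P^1}^{\infty}(\A^1,\mathrm{triv})_+\to\unit$ tensored with everything, equivalently the localization away from the compact object $\cofib\!\big(\Sigma_{\P^1}^{\infty}(\A^1,\mathrm{triv})_+\to\unit\big)$; such a finite localization is smashing, so its local subcategory --- the image of $\omega^{*}$ --- is closed under colimits and tensor products, upgrading item (4) to the assertions that $\omega^{*}$ preserves colimits and is monoidal.

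\textbf{Main obstacle.} The hard part is the second half of Step 3: that a proper log smooth scheme represents an $\A^1$‑invariant theory on $\lSm/k$. This is exactly the place where a transfers‑free argument must work harder than \cite{logDM}; the blow‑up/localization dévissage has to be carried out across the unstable‑to‑stable passage while controlling the log structures along the boundary, and one needs that a log modification (a composite of admissible blow‑ups) leaves the represented theory unchanged, which again rests on resolution of singularities.
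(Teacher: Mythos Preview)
Your route diverges substantively from the paper's, and while Steps 1--2 are essentially sound, Step 3 has a genuine gap.

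First a minor correction: $\lambda$ is the \emph{left} adjoint of $\omega$, not the right---your own comma-category computation $\{\lambda(X)\to Y\}\simeq(\Sm/k)_{/(Y-\partial Y)}$ says precisely $\Hom(\lambda(X),Y)\simeq\Hom(X,\omega(Y))$, i.e.\ $\lambda\dashv\omega$. This does not affect the identity $\omega^*\simeq\mathrm{Lan}_\lambda$, which is correct and is what you actually use.

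The paper builds $\omega^*$ from the \emph{other} site-level adjoint: a right adjoint $c\colon\Sm/k\to\lSm/k[\Adm^{-1}]$ sending $X$ to a proper log smooth compactification (Proposition~\ref{omega.1}, using resolution of singularities). With this choice $\omega^*:=c_\sharp$, so item (2) holds on the nose; the hard work is showing that $c_\sharp$ descends through the Nisnevich localization---this is all of \S\ref{omega.14}, culminating in the statement (Proposition~\ref{omega.5}) that $c$ carries Nisnevich distinguished squares to log-motivically cocartesian squares, proved via blow-up formulas and deformation to the normal cone \emph{inside} $\logSH$. Item (5) is then a one-line citation of an external result.

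Your Step 3 is circular. You write ``the second point is exactly item (2)'', meaning that once $\Sigma^\infty_{\P^1}Y_+$ is known to be $\A^1$-local for proper $Y$, the unit $\Sigma^\infty_{\P^1}Y_+\to\omega^*\omega_\sharp\Sigma^\infty_{\P^1}Y_+$ is an equivalence. But $\A^1$-locality only places $\Sigma^\infty_{\P^1}Y_+$ in the subcategory where $\omega^*$ \emph{lands}; it does not place it in the essential \emph{image} of $\omega^*$. For that you would need (5), and your argument for (5) in turn requires the proper generators to lie in the image of $\omega^*$, which is (2). Moreover, the ``second point'' is itself the entire content of the theorem: already the base case ($\partial Y=\emptyset$, so $Y=X$ is proper smooth with trivial log structure) asserts that
\[
\Hom_{\logSH(k)}(\Sigma^\infty_{\P^1} Z_+,\Sigma^\infty_{\P^1} X_+)\simeq\Hom_{\SH(k)}(\Sigma^\infty_{\P^1}(Z-\partial Z)_+,\Sigma^\infty_{\P^1} X_+)
\]
for all $Z\in\lSm/k$, and the blow-up and projective bundle formulas \emph{in $\SH(k)$} say nothing about mapping spaces \emph{in $\logSH(k)$}. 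Finally, your smashing argument for the colimit-preservation and monoidality in (4) is correct as far as it goes, but to conclude that $\omega^*$ itself (rather than the inclusion of the $\A^1$-local part) preserves colimits you must identify the image of $\omega^*$ with the whole $\A^1$-local subcategory---which is again (5).
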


Our convention for adjoint functors between categories or $\infty$-categories $F: \cC\rightleftarrows \cD:G$ is that $F$ is a left adjoint of $G$.

The monoidality of $\omega^*$ immediately implies the following results on the log motivic stable homotopy groups:

\begin{cor}
\label{omega.8}
Let $k$ be a perfect field admitting resolution of singularities.
Then for all integers $p$ and $q$,
the log motivic stable homotopy group
\[
\pi_{p,q}^{\log}(k)
:=
\pi_0(\Hom_{\logSH(k)}(\Sigma^{p,q}\unit,\unit))
\]
is isomorphic to the motivic stable homotopy group
\[
\pi_{p,q}(k)
:=
\pi_0(\Hom_{\SH(k)}(\Sigma^{p,q}\unit,\unit)),
\]
where $\unit$ denotes the units in $\SH(k)$ and $\logSH(k)$.
\end{cor}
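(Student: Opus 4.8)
The plan is to deduce the statement formally from Theorem~\ref{omega.6}, using only that $\omega^*$ is monoidal, colimit-preserving, and fully faithful. Being monoidal, $\omega^*$ carries the unit to the unit, $\omega^*\unit\simeq\unit$, and carries $\otimes$-invertible objects together with their $\otimes$-inverses to such. Since it preserves colimits between stable $\infty$-categories it is exact, hence commutes with the simplicial suspension and its inverse. The bigraded spheres $\Sigma^{p,q}\unit$ are obtained by smashing together copies of the simplicial circle, the Tate object, and their $\otimes$-inverses, so it suffices to show that $\omega^*$ sends the Tate object of $\SH(k)$ to that of $\logSH(k)$. The Tate object is the cofiber of the basepoint map $\unit\to\Sigma_{\P^1}^\infty\P^1_+$; applying Theorem~\ref{omega.6}(2) to the proper smooth log scheme $Y=\P^1$ with trivial log structure, for which $Y-\partial Y=\P^1$, identifies $\omega^*(\Sigma_{\P^1}^\infty\P^1_+)$ with the suspension spectrum of $\P^1$ in $\logSH(k)$ compatibly with the basepoint section; taking cofibers of the unit map then shows $\omega^*$ sends the Tate object to the Tate object, and combined with the above, $\omega^*(\Sigma^{p,q}\unit)\simeq\Sigma^{p,q}\unit$ for all $p,q\in\Z$.

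With these identifications in hand, full faithfulness of $\omega^*$ gives
\[
\Hom_{\logSH(k)}(\Sigma^{p,q}\unit,\unit)
\simeq
\Hom_{\logSH(k)}(\omega^*\Sigma^{p,q}\unit,\omega^*\unit)
\simeq
\Hom_{\SH(k)}(\Sigma^{p,q}\unit,\unit),
\]
and passing to $\pi_0$ yields $\pi_{p,q}^{\log}(k)\cong\pi_{p,q}(k)$. Equivalently, one can argue via the counit $\omega_\sharp\omega^*\xrightarrow{\ \sim\ }\id_{\SH(k)}$, which is an equivalence precisely because $\omega^*$ is fully faithful, together with the adjunction $\omega_\sharp\dashv\omega^*$; this reduces the mapping space computation in $\logSH(k)$ to the corresponding one in $\SH(k)$.

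I expect the only point requiring genuine care, as opposed to formal manipulation with the adjunction, is the compatibility of $\omega^*$ with the $\P^1$-suspension, i.e.\ the claim $\omega^*(\Sigma^{p,q}\unit)\simeq\Sigma^{p,q}\unit$. This is exactly where property~(2) of Theorem~\ref{omega.6} enters, and it amounts to checking that the equivalence provided there is compatible with the relevant basepoint sections (so that it descends to cofibers). Everything else is an immediate consequence of the monoidality and full faithfulness asserted in Theorem~\ref{omega.6}.
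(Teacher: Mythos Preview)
Your argument is correct and is essentially a detailed unpacking of the paper's one-line justification, which reads simply ``The monoidality of $\omega^*$ immediately implies the following results.'' You rightly make explicit that full faithfulness (Theorem~\ref{omega.6}(4)) is what yields the Hom-space comparison, and your appeal to Theorem~\ref{omega.6}(2) for $Y=\P^1$ is a clean way to verify $\omega^*(\Sigma^{p,q}\unit)\simeq\Sigma^{p,q}\unit$; the basepoint compatibility you flag is automatic from functoriality of $\omega^*$ applied to the section $\Spec k\to\P^1$, so no extra care is actually needed there.
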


By \cite[Theorem 6.1.4]{logSH},
the logarithmic $K$-theory $\P^1$-spectrum $\logKGL$ admits a geometric model using Grassmannians.
We provide an alternative streamlined proof of this in Corollary \ref{omega.13} 
as an application of Theorem \ref{omega.6}.

\subsection*{Notation and conventions}

Our standard references for log geometry and $\infty$-categories are Ogus's book \cite{Ogu} and Lurie's books \cite{HTT} and \cite{HA}.
We employ the following notation throughout this paper:

\begin{tabular}{l|l}
$k$ & a perfect field admitting resolution of singularities
\\
$S$ & a finite dimensional noetherian scheme
\\
$\Sm/S$ & the category of separated smooth schemes of finite type over $S$
\\
$\lSch/S$ & the category of separated fs log schemes of finite type over $S$
\\
$\lSm/S$ & the category of separated log smooth fs log schemes of finite
\\
& type over $S$
\\
$\SmlSm/S$ & the full subcategory of $\lSm/S$ spanned by objects $X$ such
\\
& that its underlying scheme is smooth over $S$.
\\
$\Hom_{\cC}(\cF,\cG)$
&
the space of morphisms $\cF\to \cG$ in an $\infty$-category $\cC$
\end{tabular}

\subsection*{Acknowledgements}

This research was conducted in the framework of the DFG-funded research training group GRK 2240: \emph{Algebro-Geometric Methods in Algebra, Arithmetic and Topology}.

\section{A right adjoint of \texorpdfstring{$\omega$}{w}}

For an fs log scheme $Y$,
let $\ul{Y}$ be the underlying scheme of $Y$.
Let $Y-\partial Y$ be the largest open subscheme of $Y$ with the trivial log structure,
which is possible due to \cite[Proposition III.1.2.8(2)]{Ogu}.
If $Y\in \SmlSm/S$,
then we can regard the closed complement $\partial Y$ as a strict normal crossing divisor on $\ul{Y}$.
In this case,
we often use the notation $Y=(\ul{Y},\partial Y)$.

A morphism $f\colon Y_1\to Y_2$ in $\lSm/k$ is an \emph{admissible blow-up} if $f$ is proper (i.e., $\ul{f}$ is proper) and the induced morphism of schemes $Y_1-\partial Y_1\to Y_2-\partial Y_2$ is an isomorphism.
Let $\Adm$ denote the class of admissible blow-ups in $\lSm/k$,
which admits calculus of right fractions in $\lSm/k$ by \cite[Proposition 7.6.6]{logDM}.

We have the functor
\[
\omega
\colon
\lSm/k
\to
\Sm/k
\]
given by $\omega(X):=X-\partial X$ for $X\in \lSm/k$.
Since $\omega$ is invariant under admissible blow-ups,
we have the induced functor
\[
\omega
\colon
\lSm/k[\Adm^{-1}]
\to
\Sm/k.
\]

For $X\in \Sm/k$,
there exists proper $X'\in \Sm/k$ containing $X$ as an open subscheme whose complement is a strict normal crossing divisor by resolution of singularities.
If $Y$ is the fs log scheme over $k$ with the underlying scheme $X'$ and the compactifying log structure \cite[Definition III.1.6.1]{Ogu} associated with the open immersion $X\to X'$,
then $Y$ is proper, $Y\in \SmlSm/k$, and $Y-\partial Y\simeq X$.
We will use this construction frequently.

\begin{prop}
\label{omega.1}
If $Y_1,Y_2\in \lSm/k$ and $Y_2$ is proper,
then there is a natural isomorphism
\[
\alpha\colon
\Hom_{\lSm/k[\Adm^{-1}]}(Y_1,Y_2)
\xrightarrow{\simeq}
\Hom_{\Sm/k}(Y_1-\partial Y_1,Y_2-\partial Y_2)
\]
sending a morphism $f\colon Y_1\to Y_2$ to $\omega(f)$.
\end{prop}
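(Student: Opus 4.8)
The plan is to construct an inverse map to $\alpha$ directly and then verify that the two composites are identities. The essential input is the universal property of the compactifying log structure together with the fact that $\Adm$ admits a calculus of right fractions, so that a morphism in $\lSm/k[\Adm^{-1}]$ from $Y_1$ to $Y_2$ is represented by a roof $Y_1 \xleftarrow{g} Y_1' \xrightarrow{h} Y_2$ with $g \in \Adm$, and two such roofs are identified precisely when they become equal after a further admissible blow-up of $Y_1'$.

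First I would establish surjectivity of $\alpha$. Given $\varphi \colon Y_1 - \partial Y_1 \to Y_2 - \partial Y_2$ in $\Sm/k$, I want to produce a roof. The idea is to take the closure of the graph: form $Y_1' \subseteq \ul{Y_1} \times_k \ul{Y_2}$ as the scheme-theoretic closure of the graph of $\varphi$ (viewed as a subscheme of $(Y_1 - \partial Y_1) \times_k (Y_2 - \partial Y_2)$), equipped with a suitable log structure, and then resolve to arrange that $Y_1' \in \lSm/k$ with $Y_1' - \partial Y_1' \cong Y_1 - \partial Y_1$ and that the projection $Y_1' \to Y_1$ is proper — hence an admissible blow-up, using properness of $\ul{Y_2}$ to get properness of the first projection. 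The second projection $Y_1' \to Y_2$ then exists as a morphism of \emph{schemes}; to promote it to a morphism of log schemes one invokes the universal property of the compactifying log structure on a proper $Y_2 \in \SmlSm/k$ (as recalled just before the statement), noting that the composite $Y_1' - \partial Y_1' \cong Y_1 - \partial Y_1 \to Y_2 - \partial Y_2$ lands in the trivial-log-structure locus. A mild point: a priori $Y_2$ is only proper in $\lSm/k$, not necessarily in $\SmlSm/k$; but after an admissible blow-up $Y_2$ can be replaced by an object of $\SmlSm/k$ with the same $\omega$, and this does not change either Hom-set, so one may assume $Y_2 = (\ul{Y_2}, \partial Y_2) \in \SmlSm/k$ from the start.

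Next I would prove injectivity. Suppose two roofs $Y_1 \xleftarrow{g_i} Y_1^{(i)} \xrightarrow{h_i} Y_2$, $i = 1,2$, have $\omega(h_1 g_1^{-1}) = \omega(h_2 g_2^{-1})$ in $\Sm/k$. Using the calculus of right fractions I may pass to a common admissible refinement $Y_1'$ dominating both $Y_1^{(1)}$ and $Y_1^{(2)}$, so WLOG $Y_1^{(1)} = Y_1^{(2)} = Y_1'$ and $g_1 = g_2 = g$; then the two morphisms $h_1, h_2 \colon Y_1' \to Y_2$ restrict to the same morphism on $Y_1' - \partial Y_1' = Y_1 - \partial Y_1$. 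Since $\ul{Y_2}$ is separated and $Y_1' - \partial Y_1'$ is dense in $\ul{Y_1'}$ (because $\partial Y_1'$ is a divisor and $\ul{Y_1'}$ is reduced and separated), the underlying morphisms $\ul{h_1} = \ul{h_2}$ agree. Finally, two morphisms of log schemes $Y_1' \to Y_2$ with the same underlying morphism of schemes and the same restriction to the dense open $Y_1' - \partial Y_1'$ must coincide: the log structure of $Y_2$ is compactifying, so a chart is given locally by monomials cutting out $\partial Y_2$, and the induced map on monoid sheaves is determined by the map of underlying schemes together with agreement on the open dense locus where everything is invertible. Hence $h_1 = h_2$, so the two roofs represent the same morphism.

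It remains to check naturality and that $\alpha$ sends $f$ to $\omega(f)$ — the latter is immediate from the construction since the roof associated to an actual morphism $f \colon Y_1 \to Y_2$ is $Y_1 \xleftarrow{\id} Y_1 \xrightarrow{f} Y_2$ — and naturality in both variables follows from uniqueness of the morphisms produced, i.e. from the injectivity argument applied to comparison diagrams. The main obstacle is the surjectivity step: making the ``closure of the graph'' construction actually land in $\lSm/k$ with the required log structure and admissible-blow-up property will likely require one application of resolution of singularities (available since $k$ admits it) to regularize the closure and arrange that its boundary is a strict normal crossing divisor restricting correctly, together with a careful check that the resulting map to $Y_1$ is proper and an isomorphism away from the boundary. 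Once the scheme-level morphism to $Y_2$ is in hand, the passage to log schemes is formal from the universal property of the compactifying log structure.
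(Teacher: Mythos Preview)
Your proposal is correct and follows essentially the same route as the paper: surjectivity via the closure of the graph in $\ul{Y_1}\times\ul{Y_2}$, resolved to lie in $\SmlSm/k$, with properness of $\ul{Y_2}$ giving properness of the first projection and the universal property of the compactifying log structure (the paper's citation of \cite[Proposition III.1.6.2]{Ogu}) promoting the scheme maps to log maps; injectivity via density of $Y_1'-\partial Y_1'$ after passing to a common admissible refinement. Your preliminary reduction of $Y_2$ to $\SmlSm/k$ is harmless but unnecessary, since any $Y_2\in\lSm/k$ is log regular and hence its log structure is already the compactifying one for $Y_2-\partial Y_2\hookrightarrow\ul{Y_2}$, which is what the paper uses implicitly.
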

\begin{proof}
Let $g\colon Y_1-\partial Y_1\to Y_2-\partial Y_2$ be a morphism in $\Sm/k$,
and let $u_1\colon Y_1-\partial Y_1\to \ul{Y_1}$ and $u_2\colon Y_2-\partial Y_2\to \ul{Y_2}$ be the obvious open immersions of schemes.
Consider the morphism $(u_1,u_2g)\colon Y_1-\partial Y_1\to \ul{Y_1}\times \ul{Y_2}$,
and let $\ul{Z}$ be the closure of its image.
By resolution of singularities,
there exists $Y\in \SmlSm/k$ with a proper birational morphism of schemes $\ul{Y}\to \ul{Z}$ such that $Y-\partial Y\simeq Y_1-\partial Y_1$.
Since $\ul{Y_2}$ is proper,
$\ul{Z}$ is proper over $\ul{Y_1}$.
Together with \cite[Proposition III.1.6.2]{Ogu},
we have an admissible blow-up $Y\to Y_1$ and a morphism $Y\to Y_2$.
Use these two to form a morphism $f\colon Y_1\to Y_2$ in $\lSm/k[\Adm^{-1}]$ such that $\alpha(f)=g$.
This shows that $\alpha$ is surjective.

Let $f,f'\colon Y_1\to Y_2$ be two morphisms in $\lSm/k[\Adm^{-1}]$.
After replacing $Y_1$ by its admissible blow-up,
we may assume that $f$ and $f'$ are indeed morphisms in $\lSm/k$.
If $f$ and $g$ agree on $Y_1-\partial Y_1$,
then we have $f=g$ since $Y_1-\partial Y_1$ is dense in $Y_1$.
This shows that $\alpha$ is injective.
\end{proof}

Using resolution of singularities and Proposition \ref{omega.1},
we see that the restriction of $w\colon \lSm/k[\Adm^{-1}]\to \Sm/k$ to proper fs log schemes in $\lSm/k[\Adm^{-1}]$ is an equivalence of categories.
By considering its quasi-inverse,
we obtain a functor
\[
c\colon \Sm/k
\to
\lSm/k[\Adm^{-1}]
\]
such that
\[
c(Y-\partial Y)\simeq Y
\]
for proper $Y\in \lSm/k$.
Using Proposition \ref{omega.1} again,
we see that the functors
\[
\omega
:
\lSm/k[\Adm^{-1}]
\rightleftarrows
\Sm/k
:
c
\]
form an adjoint pair.

\section{Compactifications of Nisnevich distinguished squares}
\label{omega.14}

Recall that $S$ is a finite dimensional noetherian scheme.
We often deal with the case when $S=\Spec(k)$.

Let $\sT(S)$ be a stable $\infty$-category.
We refer to \cite[Definition 3.1.4(1)]{logDM} for the notions of strict Nisnevich distinguished squares and strict Nisnevich cd-structure in $\lSm/S$.
The associated topology is the strict Nisnevich topology,
and $sNis$ is its shorthand.
We set $\square:=(\P^1,\infty)$.
We assume that there exists a functor
\[
M\colon \rN(\lSm/S) \to \sT(S)
\]
satisfying the following conditions:
\begin{itemize}
\item (Strict Nisnevich descent)
For every strict Nisnevich distinguished square $Q$ in $\lSm/S$,
the square $M(Q)$ in $\sT(S)$ is cocartesian.
\item (Dividing descent)
For every dividing cover (i.e., a surjective proper log \'etale monomorphism) $f$ in $\lSm/S$,
$M(f)$ is an isomorphism in $\sT(S)$.
\item ($\square$-invariance)
For $Y\in \lSm/S$,
the projection $p\colon Y\times \square\to Y$ induces the isomorphism $M(p)\colon M(Y\times \square)\to M(Y)$.
\end{itemize}

Suppose that we have $Y\in \SmlSm/S$ and $W\in \lSch/S$ such that $\ul{W}$ and $\ul{W}+\partial Y$ are strict normal crossing divisors on $\ul{Y}$.
In this setting,
we use the convenient notation
\[
(Y,W)
:=
(\ul{Y},\ul{W}+\partial X),
\]
which adds the additional boundary $W$ to $Y$.

We will use the notion of an \emph{$n$-cube} in an $\infty$-category $\cC$,
see \cite[\S A.6]{logSH} for the details.
A $0$-cube is just an object of $\cC$.
For every integer $n\geq 0$,
an $(n+1)$-cube $Q$ can be constructed inductively as a morphism of $n$-cubes $Q_0\to Q_1$.
If $\cC$ is stable,
then the \emph{total cofiber $\tcofib(Q)$ of $Q$} has an inductive formula
\[
\tcofib(Q)
\simeq
\cofib(\tcofib(Q_0)\to \tcofib(Q_1)).
\]
The total cofiber of a $0$-cube is its underlying object.
We say that $Q$ is \emph{cocartesian} if $\tcofib(Q)\simeq 0$.

\begin{df}
Let $W\to Y$ be a strict closed immersion in $\SmlSm/S$ such that $\ul{W}$ is strict normal crossing with $\partial Y$.
In other words,
Zariski locally on $Y$,
there exists a strict \'etale morphism $w\colon Y\to \A_\N^m \times \A^{n+l} \times S$ with $\A_\N:=(\A^1,\{0\})$ for some integers $m,n,l\geq 0$ such that $W\simeq w^{-1}(\A_\N^m \times \{0\}^n \times \A^l\times S)$.
The \emph{blow-up of $Y$ along $W$} is
\[
\Bl_Y W
:=
(\Bl_{\ul{Y}}\ul{W},\widetilde{\partial Y}),
\]
where $\widetilde{\partial Y}$ is the sum of the strict transforms of the components of $\partial Y$.
Observe that we have $W\times_Y \Bl_W Y\in \SmlSm/S$,
which can be shown using the local description of $W\to Y$ in terms of $w$.
\end{df}

\begin{prop}
\label{omega.7}
Let $W\to Y$ be a strict closed immersion in $\SmlSm/S$ such that $\ul{W}$ is strict normal crossing with $\partial Y$.
Consider the cartesian square
\[
Q
:=
\begin{tikzcd}
W\times_Y \Bl_W Y\ar[r]\ar[d]&
\Bl_W Y\ar[d,"p"]
\\
W\ar[r]&
Y,
\end{tikzcd}
\]
where $p$ is the projection.
Then the square $M(Q)$ in $\sT(S)$ is cocartesian.
\end{prop}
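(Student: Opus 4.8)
The plan is to reduce the statement to a strict Nisnevich descent computation by working Zariski-locally, and then to recognize the blow-up square as one glued from standard local pieces where the three axioms of $M$ can be applied directly.

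First I would use the local description in the definition of $\Bl_W Y$: Zariski-locally on $Y$ there is a strict étale morphism $w\colon Y\to \A_\N^m\times \A^{n+l}\times S$ with $W\simeq w^{-1}(\A_\N^m\times\{0\}^n\times\A^l\times S)$. Since blow-ups commute with flat (in particular strict étale) base change, and since strict Nisnevich descent lets us reduce to a Zariski cover, the square $Q$ is, locally, the pullback of the corresponding square for the model $(\A_\N^m\times\A^{n+l}\times S,\ \A_\N^m\times\{0\}^n\times\A^l\times S)$. So it suffices to treat that model case. Stripping off the factor $\A_\N^m\times\A^l\times S$ (which by $\square$-invariance and its iterates, together with strict Nisnevich descent along the product projection, does not affect cocartesianness), one is reduced to the blow-up of $\A^n$ at the origin, i.e.\ to showing $M$ sends the square with corners $\P^{n-1}\times\{0\}$ (really the exceptional $\P^{n-1}$ with its induced log structure), $\Bl_0\A^n$, $\{0\}$, $\A^n$ to a cocartesian square.

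For this base case I would argue by induction on $n$, or better, exhibit the square as built from the two standard charts of $\Bl_0\A^n$. The blow-up $\Bl_0\A^n$ is covered by affine charts isomorphic to $\A^n$, and on overlaps one has $\square$-like projections; the exceptional divisor meets each chart in a coordinate hyperplane. Thus the square $Q$ in the model case is assembled, via iterated strict Nisnevich distinguished squares, from squares of the form ``projection $Y'\times\square\to Y'$'' restricted appropriately, together with the $\square$-invariance axiom applied to the exceptional direction. Concretely, the deformation-to-the-normal-cone style argument works here: the exceptional $\P^{n-1}$ carries the log structure making it $(\P^{n-1},\text{hyperplane at infinity of the chart})$, and $\Bl_0\A^n\to \A^n$ restricted over $\A^n\setminus\{0\}$ is an isomorphism. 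Using dividing descent one may further simplify any log-étale monomorphisms that appear.

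The main obstacle I anticipate is the bookkeeping of log structures on the exceptional locus: one must check that $W\times_Y\Bl_W Y$, with the log structure it inherits as a strict closed subscheme of $\Bl_W Y$, is the ``expected'' object (the projectivized normal cone with the restricted boundary), so that the inductive gluing actually produces strict Nisnevich distinguished squares in $\SmlSm/S$ rather than merely squares of schemes. The stated fact that $W\times_Y\Bl_W Y\in\SmlSm/S$, proven from the local chart $w$, is exactly what makes this go through, so I would lean on it; the remaining work is to verify that each chart-overlap square is strict Nisnevich distinguished and that the total-cofiber recursion $\tcofib(Q)\simeq\cofib(\tcofib(Q_0)\to\tcofib(Q_1))$ collapses to $0$ after applying the three axioms. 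Once the model case is done, the general case follows by the flat base change and local-to-global reduction described above.
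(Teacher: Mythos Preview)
Your outline has the right shape (localize, reduce to a model, handle the model), but the reduction step contains a genuine gap. After passing to a Zariski open on which a strict \'etale chart $w\colon Y\to \A_\N^m\times\A^{n+l}\times S$ exists, you write that ``it suffices to treat that model case'' because blow-ups commute with flat base change. But $w$ is only strict \'etale, not an open immersion, and the only descent available for $M$ is strict Nisnevich descent. Knowing that $M$ sends the model square to a cocartesian square does \emph{not} imply that $M$ sends its pullback along $w$ to one; there is no axiom letting you descend cocartesianness along an arbitrary strict \'etale map. The paper closes exactly this gap by a Nisnevich excision: applying \cite[Construction 7.2.8]{logDM} to the pair $(\ul{Y},\ul{V})$ with $\ul{V}=\ul{w}^{-1}(\{0\}^{m+n}\times\A^l\times S)$ produces strict \'etale maps $Y\xleftarrow{u}Y_2\xrightarrow{v}Y_1$ with $Y_1\simeq \A_\N^m\times\A^n\times\ul{V}$ and with matching preimages of $\ul{V}$, so that the comparison squares are strict Nisnevich distinguished. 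One then assembles a $4$-cube from these and uses induction on $m+n$ (over $Y-\ul{V}$ the invariant drops) to pass from $Q$ to $Q_2:=Q\times_Y Y_2$ to the model square $Q_1$. Only after this does one land in the product case $Y\simeq\A^n\times W$.

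A second problem is your plan to strip off the factor $\A_\N^m\times\A^l\times S$ by ``$\square$-invariance and its iterates.'' In this framework neither $\A_\N$-invariance nor $\A^1$-invariance is an axiom, and indeed the failure of $\A^1$-invariance is the whole point of the logarithmic theory; $\A_\N=(\A^1,\{0\})$ is not $\square=(\P^1,\infty)$. The paper never strips these factors: the $\A_\N^m$ and the $\A^l$ (inside $\ul{V}$) are absorbed into $W_1$, so that the terminal case is $Y\simeq\A^n\times W$ with $W$ an arbitrary object of $\SmlSm/S$, handled by the argument of \cite[Theorem 7.3.3]{logDM}. Your base case should likewise keep $W$ general rather than reducing to $\Bl_0\A^n$.
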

\begin{proof}
The question is Zariski local on $Y$.
Hence we may assume that there exists a strict \'etale morphism $w\colon Y\to \A_\N^m \times \A^{n+l} \times S$ for some integers $m,n,l\geq 0$ such that $W\simeq w^{-1}(\A_\N^m \times \{0\}^n \times \A^l\times S)$.
We proceed by induction on $m+n$.

The claim is clear if $m+n=0$ since $W\simeq Y$ in this case.
Assume $m+n>0$.
Consider the underlying morphism of schemes $\ul{w}\colon \ul{Y}\to \A^{m+n+l}\times S$,
and we set $\ul{V}:=\ul{w}^{-1}(\{0\}^{m+n} \times  \A^l\times  S)$.
Apply \cite[Construction 7.2.8]{logDM} to the pair 
$(\ul{Y},\ul{V})$ to obtain strict \'etale morphisms $Y\xleftarrow{u} Y_2\xrightarrow{v} Y_1$ in $\SmlSm/S$ with cartesian squares
\[
\begin{tikzcd}
\ul{V}\ar[d]\ar[r,"\id",leftarrow]&
\ul{V}\ar[r,"\id"]\ar[d]&
\ul{V}\ar[d]
\\
\ul{Y}\ar[r,leftarrow,"\ul{u}"]&
\ul{Y_2}\ar[r,"\ul{v}"]&
\ul{Y_1}
\end{tikzcd}
\]
such that $Y_1\simeq \A_\N^m \times \A^n \times \ul{V}$.
We set $W_1:=\A_\N^m \times \{0\}^n \times \ul{V}$.
In this construction,
we have an isomorphism
\[
W_2:=
W\times_Y Y_2
\simeq
W_1\times_{Y_1}Y_2.
\]

Let $C$ be the $3$-cube given by the projection $Q_2:=Q\times_Y Y_2\to Q$,
and let $D$ be the $4$-cube given by the projection $C\times_Y (Y-\ul{V})\to C$.
The induced square
\[
B:=
\begin{tikzcd}
Y_2\times_Y (Y-\ul{V})\ar[d]\ar[r]&
Y_2\ar[d,"u"]
\\
Y-\ul{V}\ar[r]&
Y
\end{tikzcd}
\]
and its pullbacks $B\times_Y W$, $B\times_Y \Bl_W Y$, and $B\times_Y (W\times_Y \Bl_W Y)$ are strict Nisnevich distinguished squares.
It follows that $M(D)$ is cocartesian.

The squares $M(Q\times_Y (Y-\ul{V}))$ and $M(Q\times_Y (Y-\ul{V})\times_Y Y_2)$ are cocartesian by induction.
Hence $M(C\times_Y (Y-\ul{V}))$ is cocartesian.
Together with the fact that $M(D)$ is cocartesian,
we deduce that $M(C)$ is cocartesian.
Hence to show that $M(Q)$ is cocartesian,
it suffices to show that $M(Q_2)$ is cocartesian.
By a similar argument,
we reduce to showing that $M(Q_1)$ is cocartesian,
where
\[
Q_1
:=
\begin{tikzcd}
W_1\times_{Y_1} \Bl_{W_1} {Y_1}\ar[r]\ar[d]&
\Bl_{W_1} Y_1\ar[d]
\\
W_1\ar[r]&
Y_1.
\end{tikzcd}
\]
Replace the pair $(Y,W)$ with $(Y_1,W_1)$ to reduce to the case when $Y\simeq \A^n \times W$.

In this case,
argue as in the second half of the proof of \cite[Theorem 7.3.3]{logDM} to show that $M(Q)$ is cocartesian.
\end{proof}

\begin{const}
The strict Nisnevich descent property implies the Zariski separation property in \cite[\S 7]{logDM}.
Since loc.\ cit.\ works on the level of triangulated categories and not $\infty$-categories,
the Zariski separation property is additionally needed there.

Hence we can use \cite[Theorem 7.6.7]{logDM} to see that the functor $M\colon \rN(\lSm/k)\to \sT(k)$ is invariant under admissible blow-ups.
From this,
we obtain the induced functor
\[
M\colon \rN(\lSm/k[\Adm^{-1}])
\to
\sT(k).
\]
In particular,
we can consider $M(c(X))\in \sT(k)$ for $X\in \Sm/k$.
\end{const}

\begin{prop}
\label{omega.4}
Let $i\colon Z\to X$ be a closed immersion in $\Sm/k$,
Consider the cartesian square
\[
Q
:=
\begin{tikzcd}
Z\times_X \Bl_Z X\ar[d]\ar[r]&
\Bl_Z X\ar[d,"p"]
\\
Z\ar[r,"i"]&
X,
\end{tikzcd}
\]
where $p$ is the projection.
Then the square $M(c(Q))$ in $\sT(k)$ is cocartesian.
\end{prop}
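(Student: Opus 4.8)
The plan is to realise $Q$ as the image under $\omega$ of a blow-up square attached to a suitable compactification of the pair $(X,Z)$, and then to appeal to Proposition~\ref{omega.7} together with the fact that $c$ is a section of $\omega$ on proper objects.

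First I would use resolution of singularities to choose a proper $Y=(\ul Y,\partial Y)\in\SmlSm/k$ with an identification $Y-\partial Y\simeq X$ such that the closure $\ol Z$ of $Z$ in $\ul Y$ is smooth and strict normal crossing with $\partial Y$. Since $Z$ is closed in $X$ one has $\ol Z\cap X=Z$, i.e.\ $\partial Y\cap\ol Z=\ol Z-Z$, so setting $W:=(\ol Z,\partial Y\cap\ol Z)$ produces a strict closed immersion $W\to Y$ in $\SmlSm/k$ with $\ul W$ strict normal crossing with $\partial Y$ and $W-\partial W\simeq Z$. As $Y$ and $W$ are proper, the construction of $c$ gives $c(X)\simeq Y$ and $c(Z)\simeq W$.

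Next, Proposition~\ref{omega.7} applied to $W\to Y$ shows that $M(Q')$ is cocartesian in $\sT(k)$, where $Q'$ is the cartesian square
\[
Q'
:=
\begin{tikzcd}
W\times_Y\Bl_W Y\ar[r]\ar[d]&
\Bl_W Y\ar[d]
\\
W\ar[r]&
Y
\end{tikzcd}
\]
and $\Bl_W Y$ has underlying scheme the blow-up $\Bl_{\ol Z}\ul Y$ of $\ul Y$ along $\ol Z$ and boundary the strict transform $\widetilde{\partial Y}$. The key step, and the one I expect to be the main obstacle, is the identification $\omega(Q')\simeq Q$ of squares in $\Sm/k$. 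The point is that no irreducible component of $\ol Z$ lies in $\partial Y$ (each one meets $\ol Z\cap X=Z$), so a local equation of $\partial Y$ does not belong to the ideal sheaf of $\ol Z$ and hence its pullback to $\Bl_{\ol Z}\ul Y$ does not vanish along the exceptional divisor; thus $\widetilde{\partial Y}$ and $\pi^{-1}(\partial Y)$ have the same support, where $\pi\colon\Bl_{\ol Z}\ul Y\to\ul Y$ is the blow-down. It follows that
\[
\omega(\Bl_W Y)=\Bl_{\ol Z}\ul Y-\widetilde{\partial Y}=\Bl_{\ol Z}\ul Y\times_{\ul Y}X=\Bl_Z X,
\]
using that blow-up commutes with the open base change $X\hookrightarrow\ul Y$ and that $\ol Z\cap X=Z$; similarly the underlying scheme of $W\times_Y\Bl_W Y$ is the exceptional divisor of $\Bl_{\ol Z}\ul Y$, with boundary the restriction of $\widetilde{\partial Y}$, so $\omega(W\times_Y\Bl_W Y)$ is the exceptional divisor of $\Bl_Z X\to X$, i.e.\ $Z\times_X\Bl_Z X$, and the structure maps of $Q'$ restrict to those of $Q$. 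The delicate part here will be pinning down the log structure on the fiber product $W\times_Y\Bl_W Y$; I would do this via the \'etale-local coordinate description of $W\to Y$ underlying Proposition~\ref{omega.7}.

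Finally, since all four corners of $Q'$ are proper fs log schemes, being blow-ups of or closed subschemes of the proper $Y$, the construction of $c$ gives $c(\omega(Q'))\simeq Q'$ in $\lSm/k[\Adm^{-1}]$. Combined with $\omega(Q')\simeq Q$ this yields $c(Q)\simeq Q'$, and applying $M$ we conclude that $M(c(Q))\simeq M(Q')$ is cocartesian. The degenerate cases $Z=\emptyset$ and $Z=X$, where $\Bl_Z X$ is $X$ and $\emptyset$ respectively, are immediate.
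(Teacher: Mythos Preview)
Your approach is exactly the paper's: compactify $(X,Z)$ to a pair $(Y,W)$ with $Y$ proper and $\ul W$ strict normal crossing with $\partial Y$, identify $c(Q)$ with the blow-up square $Q'$ for $W\hookrightarrow Y$, and invoke Proposition~\ref{omega.7}. The paper simply asserts the identification $c(Q)\simeq Q'$ without comment, whereas you spell out the reason (no component of $\ol Z$ lies in $\partial Y$, so the strict transform $\widetilde{\partial Y}$ coincides set-theoretically with $\pi^{-1}(\partial Y)$ and hence $\omega(\Bl_W Y)=\Bl_Z X$); this extra justification is correct and is really the only nontrivial point.
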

\begin{proof}
By resolution of singularities,
there exists proper $Y\in \SmlSm/k$ such that $Y-\partial Y\simeq X$ and the closure $\ol{Z}$ of $Z$ in $\ul{Y}$ is strict normal crossing with $\partial Y$.
Let $W$ be the fs log scheme $(\ol{Z},\ol{Z}\cap \partial Y)$,
which is an object of $\SmlSm/k$.
Consider the induced cartesian square
\[
R:=
\begin{tikzcd}
W\times_Y \Bl_W Y\ar[d]\ar[r]&
\Bl_W Y\ar[d]
\\
W\ar[r]&
Y,
\end{tikzcd}
\]
Since $c(Q)\simeq R$ as squares in $\lSm/k[\Adm^{-1}]$,
we need to show that $M(R)$ is cocartesian.
This follows from Proposition \ref{omega.7}.
\end{proof}

\begin{prop}
\label{omega.3}
Let
\[
Q
:=
\begin{tikzcd}
(U,U\cap \ul{W})\ar[r]\ar[d]&
U\ar[d,"j"]
\\
(Y,\ul{W})\ar[r]&
Y
\end{tikzcd}
\]
be a cartesian square in $\SmlSm/k$,
where $j$ is an open immersion and $\ul{W}$ is a smooth divisor on $\ul{Y}$ such that $\ul{W}+\partial Y$ is a strict normal crossing divisor.
If $\{U,Y-\ul{W}\}$ is a Zariski cover of $X$,
then the square $M(Q)$ in $\sT(k)$ is cocartesian.
\end{prop}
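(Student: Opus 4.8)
The plan is to deduce the statement from strict Nisnevich (Zariski) descent applied both before and after adjoining $\ul{W}$ to the boundary, comparing the two resulting descent squares by means of a single $3$-cube; only the strict Nisnevich descent property of $M$ will be used. Write $V:=Y-\ul{W}$, so that the hypothesis reads $\ul{W}\subseteq U$; note that $\ul{W}\cap V=\emptyset$ and $U\cap V=U-(U\cap\ul{W})$.

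First I would check that the cartesian squares
\[
Q''
:=
\begin{tikzcd}
U\cap V\ar[r]\ar[d]&
U\ar[d]
\\
V\ar[r]&
Y
\end{tikzcd}
\qquad\text{and}\qquad
Q'
:=
\begin{tikzcd}
U\cap V\ar[r]\ar[d]&
(U,U\cap\ul{W})\ar[d]
\\
V\ar[r]&
(Y,\ul{W})
\end{tikzcd}
\]
in $\SmlSm/k\subseteq\lSm/k$ are Zariski, hence strict Nisnevich, distinguished squares: in each case the upper-right corner together with the lower-left corner form a Zariski cover of the lower-right corner, and all four maps are strict open immersions. Here the intermediate objects $V$ and $U\cap V$ carry the log structures restricted from $Y$, and it is essential that these are literally the same in $Q'$ as in $Q''$, since $\ul{W}$ does not meet $V$. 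By strict Nisnevich descent, $M(Q')$ and $M(Q'')$ are cocartesian in $\sT(k)$, i.e.\ $\tcofib(M(Q'))\simeq\tcofib(M(Q''))\simeq 0$.

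Next I would consider the morphism $(Y,\ul{W})\to Y$ in $\lSm/k$ — the identity on underlying schemes, induced by the inclusion of boundary divisors $\partial Y\subseteq\ul{W}+\partial Y$ — together with its restrictions to $U$, $V$ and $U\cap V$. These assemble into a morphism of squares $Q'\to Q''$ which, by the same remark about restricted log structures, is the identity on the corners $V$ and $U\cap V$; let $C$ be the associated $3$-cube in $\lSm/k$. One computes $\tcofib(M(C))$ — which does not depend on the direction along which $C$ is sliced — in two ways. Viewing $C$ as the morphism $M(Q')\to M(Q'')$ of squares gives $\tcofib(M(C))\simeq\cofib(\tcofib(M(Q'))\to\tcofib(M(Q'')))\simeq\cofib(0\to 0)\simeq 0$. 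Slicing $C$ instead along the direction of the open immersions $U\cap V\hookrightarrow U$ and $V\hookrightarrow Y$, it becomes the morphism of squares from
\[
\begin{tikzcd}
U\cap V\ar[r]\ar[d,"\id"]&
V\ar[d,"\id"]
\\
U\cap V\ar[r]&
V
\end{tikzcd}
\]
to the square $Q$ of the proposition; since the total cofiber of the image of the displayed square under $M$ is $\cofib(\id)\simeq 0$, this computation yields $\tcofib(M(C))\simeq\cofib(0\to\tcofib(M(Q)))\simeq\tcofib(M(Q))$. Comparing the two, $\tcofib(M(Q))\simeq 0$, so $M(Q)$ is cocartesian.

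The individual steps are routine; the points deserving care are the verification that $Q'$ and $Q''$ are strict Nisnevich distinguished squares with matching log structures on $V$ and $U\cap V$, and the combinatorial bookkeeping that identifies $Q$ and the trivial square among the faces of the $3$-cube $C$. I do not anticipate a genuine obstacle here.
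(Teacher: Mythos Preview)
Your proof is correct and follows essentially the same approach as the paper: both identify the two Zariski squares $Q'$ and $Q''$ (the paper writes $Q''$ as the composite $Q\cup Q'$) as strict Nisnevich distinguished, then deduce that $M(Q)$ is cocartesian. The only difference is packaging—the paper invokes the pasting lemma for cocartesian squares directly, whereas you recast the same deduction as a $3$-cube total-cofiber computation.
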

\begin{proof}
Consider the induced square
\[
Q'
:=
\begin{tikzcd}
U-U\cap \ul{W}\ar[d]\ar[r]&
(U,U\cap \ul{W})\ar[d]
\\
Y-\ul{W}\ar[r]&
(Y,\ul{W}).
\end{tikzcd}
\]
We regard $Q\cup Q'$ as a square.
Since $Q'$ and $Q\cup Q'$ are strict Nisnevich distinguished squares,
$M(Q')$ and $M(Q\cup Q')$ are cocartesian.
It follows that $M(Q)$ is cocartesian too.
\end{proof}

Let $W\to Y$ be a strict closed immersion in $\SmlSm/k$ such that $\ul{W}$ is strict normal crossing with $\partial Y$.
We need to distinguish the two different \emph{deformation spaces}
\[
\rD_Z^{\A^1}(Y)
:=
\Bl_{W\times \{0\}}(Y\times \A^1)-\Bl_{W\times \{0\}}(Y\times \{0\})
\]
and
\[
\rD_Z^{\square}(Y)
:=
\Bl_{W\times \{0\}}(Y\times \square)-\Bl_{W\times \{0\}}(Y\times \{0\}).
\]
The \emph{normal bundle of $W$ in $Y$} is defined to be
\[
\rN_W Y:=
\rN_{\ul{W}}\ul{Y} \times_{\ul{Y}}Y,
\]
where $\rN_{\ul{W}}\ul{Y}$ is the normal bundle of $\ul{W}$ in $\ul{Y}$.

For a morphism $Y_1\to Y_2$ in $\lSm/k[\Adm^{-1}]$,
we set
\[
M(Y_2/Y_1)
:=
\cofib(M(Y_1)\to M(Y_2)).
\]

\begin{prop}
\label{omega.2}
Let $Z\to X$ be a closed immersion in $\Sm/k$.
Then the induced morphisms
\begin{align*}
M(c(X)/c(X-Z))
\to &
M(c(\rD_Z^{\A^1} X)/c(\rD_Z^{\A^1} X-Z\times \A^1))
\\
\leftarrow &
M(c(\rN_Z X)/c(\rN_Z X-Z))
\end{align*}
are isomorphisms in $\sT(k)$.
\end{prop}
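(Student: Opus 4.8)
The plan is to transport the claim along the compactification functor $c$ of Section~2 and then to run the deformation-to-the-normal-cone argument of \cite{logDM} in the stable $\infty$-category $\sT(k)$, with the logarithmic blow-up excision of Proposition~\ref{omega.7} (equivalently Proposition~\ref{omega.4}) and the $\square$-invariance of $M$ taking over the roles played classically by Zariski excision and $\A^1$-invariance. The pertinent instance of $\A^1$-invariance does hold for $M\circ c$: if $Y\in\SmlSm/k$ is proper with $Y-\partial Y\simeq X$, then $Y\times\square$ is a proper object of $\SmlSm/k$ with $(Y\times\square)-\partial(Y\times\square)\simeq X\times\A^1$, so $c(X\times\A^1)\simeq c(X)\times\square$ in $\lSm/k[\Adm^{-1}]$ and hence $M(c(X\times\A^1))\simeq M(c(X))$ by $\square$-invariance.

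First I would pin down the logarithmic dictionary. Using resolution of singularities, choose proper $Y\in\SmlSm/k$ with $Y-\partial Y\simeq X$ such that the closure $\ol Z$ of $Z$ in $\ul Y$ is strict normal crossing with $\partial Y$, and set $W:=(\ol Z,\ol Z\cap\partial Y)\in\SmlSm/k$; then $c(X)\simeq Y$, $c(Z)\simeq W$ and $c(X-Z)\simeq(Y,\ul W)$, so that $M(c(X)/c(X-Z))\simeq M(Y/(Y,\ul W))$ in the notation introduced before Proposition~\ref{omega.2}. Next, the scheme $\rD_Z^{\A^1}X$ is the interior of the logarithmic $\square$-deformation space $\Bl_{W\times\{0\}}(Y\times\square)-\Bl_{W\times\{0\}}(Y\times\{0\})$ built from the blow-up of Section~3; adjoining its missing boundary divisors --- the strict transforms of $\partial Y\times\square$, of $Y\times\{\infty\}$, and of $Y\times\{0\}$ (the last being $\Bl_W Y$) --- gives a proper log model, which Proposition~\ref{omega.1} identifies with $c(\rD_Z^{\A^1}X)$. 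In the same way $c(\rN_Z X)$ is identified with the projective completion $\P(\rN_W Y\oplus\cO)$ of the logarithmic normal bundle (with the hyperplane at infinity removed and the boundary strict transform kept), and $c$ applied to the three open complements $X-Z$, $\rD_Z^{\A^1}X-Z\times\A^1$, $\rN_Z X-Z$ is identified with the evident proper log models, obtained after a further admissible blow-up by adjoining to the boundary the strict transform of the closure of $W$, of $W\times\square$, and of the zero section respectively. Each identification is an instance of Proposition~\ref{omega.1}: both log schemes being compared are proper with the same interior.

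With these identifications in place, the two maps of the statement become the maps on $M(-/-)$ induced by the inclusions into the (proper model of the) $\square$-deformation space of its fibers over $1$ and over $0$. These inclusions are handled as in the deformation-to-the-normal-cone argument of \cite[\S7]{logDM}: the deformation space lies inside $\Bl_{W\times\{0\}}(Y\times\square)$, whose exceptional divisor is $\P(\rN_W Y\oplus\cO)$ and whose strict transform of $Y\times\{0\}$ is $\Bl_W Y$, so Proposition~\ref{omega.7} computes the relevant blow-up squares; $\square$-invariance of $M$ contracts the deformation family; and Proposition~\ref{omega.3} supplies the Zariski gluing between a neighbourhood of the $0$-fiber and the locus over $\square\setminus\{0\}$. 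One then assembles these, via total cofibers of cubes in $\sT(k)$, exactly as in \cite{logDM}. Transporting back along $\omega$ --- using that $M(c(-))$ of each log scheme in the dictionary computes $M(c(-))$ of the corresponding scheme in $\Sm/k$ --- produces the two claimed isomorphisms.

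The step I expect to be the main obstacle is the compactification bookkeeping of the second paragraph: arranging the compactifications of $X$, $X-Z$, $\rD_Z^{\A^1}X$, $\rD_Z^{\A^1}X-Z\times\A^1$, $\rN_Z X$ and $\rN_Z X-Z$ to be mutually compatible, so that the fiber inclusions over $0$ and $1$ and the open immersions of complements are represented by honest morphisms in $\lSm/k[\Adm^{-1}]$, and verifying the strict-normal-crossing hypotheses --- that $W\times\{0\}$ is strict normal crossing with $\partial(Y\times\square)$, and that $\Bl_W Y$ together with the strict transforms of the remaining boundary divisors of $\Bl_{W\times\{0\}}(Y\times\square)$ form a strict normal crossing divisor, possibly after a further admissible blow-up --- that are needed to apply Propositions~\ref{omega.7} and~\ref{omega.3}. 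Once these log schemes are pinned down, the deformation argument is a routine transcription of the triangulated-category argument in \cite{logDM} into $\sT(k)$.
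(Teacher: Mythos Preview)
Your outline is essentially the paper's proof. The paper chooses the same compactification $Y$ and $W$, then invokes \cite[Theorem 7.5.4]{logDM} directly for the log deformation-to-the-normal-cone isomorphisms
\[
M(Y/(\Bl_W Y,E)) \xrightarrow{\ \simeq\ } M(\rD_W^\square Y/(\Bl_{W\times\square}(\rD_W^\square Y),E^D)) \xleftarrow{\ \simeq\ } M(\rN_W Y/(\Bl_W(\rN_W Y),E^N)),
\]
rather than re-running that argument with Proposition~\ref{omega.7} and $\square$-invariance as you propose; the remaining work is exactly the compactification bookkeeping you flag as the main obstacle, and Proposition~\ref{omega.3} is used precisely to pass from the proper models $(\Bl_{W\times\{0\}}(Y\times\square),\,\Bl_{W\times\{0\}}(Y\times\{0\}))$ and $(\P(\rN_W Y\oplus\cO),H)$ to the open pieces $\rD_W^\square Y$ and $\rN_W Y$.

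One correction to your dictionary: the identification $c(X-Z)\simeq (Y,\ul W)$ is only meaningful when $\ul W$ is a divisor on $\ul Y$, which is not assumed. The paper instead takes $c(X-Z)\simeq (\Bl_W Y,E)$ with $E$ the exceptional divisor, and likewise writes the other three complements as blow-ups with the exceptional divisor added to the boundary; this is consistent with what you say for the deformation space and the normal bundle, but you should apply the same recipe to $c(X-Z)$ itself.
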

\begin{proof}
By resolution of singularities,
there exists proper $Y\in \SmlSm/k$ such that $Y-\partial Y\simeq X$ and the closure $\ol{Z}$ of $Z$ in $\ul{Y}$ is strict normal crossing with $\partial Y$.
Let $W$ be the fs log scheme $(\ol{Z},\ol{Z}\cap \partial Y)$,
which is an object of $\SmlSm/k$.
The induced morphisms
\begin{align*}
M(Y/(\Bl_W Y,E))
\to &
M(\rD_W^\square Y/(\Bl_{W\times \square}(\rD_W^\square Y),E^D))
\\
\leftarrow &
M(\rN_W Y/(\Bl_W(\rN_W Y),E^N))
\end{align*}
are isomorphisms in $\sT(k)$ by \cite[Theorem 7.5.4]{logDM},
where $E$, $E^D$, and $E^N$ are the exceptional divisors.

We have isomorphisms $c(X)\simeq Y$ and $c(X-Z)\simeq (\Bl_W Y,E)$ in $\lSm/k[\Adm^{-1}]$,
and hence we have an isomorphism
\[
M(c(X)/c(X-Z))
\simeq
M(Y/(\Bl_W Y,E))
\]
in $\sT(k)$.
Observe that $W\times \square \simeq \Bl_{W\times \{0\}}(W\times \square)$ is a strict closed subscheme of $\Bl_{W\times \{0\}}(Y\times \square)$.
We have isomorphisms
\begin{gather*}
c(\rD_Z^{\A^1} X)\simeq (\Bl_{W\times \{0\}}(Y\times \square),\Bl_{W\times \{0\}} (Y\times \{0\})),
\\
c(\rD_Z^{\A^1} X-Z\times \A^1)\simeq (\Bl_{W\times \square}(\Bl_{W\times \{0\}}(Y\times \square)),E^D+\Bl_{W\times \{0\}} (Y\times \{0\}))
\end{gather*}
in $\lSm/k[\Adm^{-1}]$.
Since $W\times \square$ and $\Bl_{W\times \{0\}}(Y\times \{0\})$ are disjoint in $\Bl_{W\times \{0\}}(Y\times \square)$,
$E^D$ and $\Bl_{W\times \{0\}} (Y\times \{0\})$ are disjoint in $\Bl_{W\times \square}(\Bl_{W\times \{0\}}(Y\times \square))$.
Proposition \ref{omega.3} implies that we have an isomorphism
\[
M(c(\rD_Z^{\A^1} X)/c(\rD_Z^{\A^1} X-Z\times \A^1))
\simeq
M(\rD_W^\square Y/(\Bl_{W\times \square}(\rD_W^\square Y),E^D)).
\]
We have isomorphisms
\begin{gather*}
c(\rN_Z X)
\simeq
(\P(\rN_W Y \oplus \cO),H),
\\
c(\rN_Z X-Z)
\simeq
(\Bl_W \P(\rN_W Y \oplus \cO),E^N+H)
\end{gather*}
in $\lSm/k[\Adm^{-1}]$,
where $H$ is the hyperplane at $\infty$.
Since $E^N$ and $H$ are disjoint in $\Bl_W \P(\rN_W Y \oplus \cO)$,
Proposition \ref{omega.3} implies that we have an isomorphism
\[
M(c(\rN_Z X)/M(c(\rN_Z X-Z)))
\simeq
M(\rN_W Y/(\Bl_W(\rN_W Y),E^N)).
\]
Combine what we have discussed to conclude.
\end{proof}

\begin{prop}
\label{omega.5}
Let
\[
Q
:=
\begin{tikzcd}
U'\ar[d]\ar[r]&
X'\ar[d,"f"]
\\
U\ar[r]&
X
\end{tikzcd}
\]
be a Nisnevich distinguish square in $\Sm/k$.
Then the square $M(c(Q))$ in $\sT(k)$ is cocartesian.
\end{prop}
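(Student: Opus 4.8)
The plan is to reduce the statement to the case of a \emph{smooth} closed complement, where Proposition \ref{omega.2} applies, and to carry out that reduction by induction on the dimension of the complement. Write $U=X-Z$, where $Z\subseteq X$ is the reduced closed complement of $U$, and set $Z':=f^{-1}(Z)$; since $Q$ is a Nisnevich distinguished square, $f$ restricts to an isomorphism $Z'\xrightarrow{\simeq}Z$, and hence to an isomorphism over every locally closed subscheme of $Z$, while $U'=f^{-1}(U)=X'-Z'$. Because $\sT(k)$ is stable and $c$ is a functor, $M(c(Q))$ is cocartesian if and only if the induced map
\[
M(c(X')/c(U'))\longrightarrow M(c(X)/c(U))
\]
is an isomorphism, and I would prove this by induction on $\dim Z$.

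For the base case I would assume $Z\in\Sm/k$. The zig-zag of isomorphisms in Proposition \ref{omega.2} is $M\circ c$ applied to the canonical closed immersions $X=X\times\{1\}\hookrightarrow \rD_Z^{\A^1}X$ and $\rN_Z X\hookrightarrow \rD_Z^{\A^1}X$ (the fibres over $1$ and over $0$), each of which carries the relevant open complement into $\rD_Z^{\A^1}X-Z\times\A^1$; in particular this zig-zag is natural for morphisms of pairs along which the blow-up center pulls back. Since $f$ is \'etale, hence flat, the formation of $\rD_Z^{\A^1}(-)$ and $\rN_Z(-)$ commutes with base change along $f$ (blow-ups commute with flat base change), giving $\rD_{Z'}^{\A^1}X'\simeq \rD_Z^{\A^1}X\times_X X'$ and $\rN_{Z'}X'\simeq \rN_Z X\times_Z Z'$ compatibly with the fibre inclusions. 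Chasing the resulting commutative ladder of zig-zags, the map in question becomes $M(c(\rN_{Z'}X')/c(\rN_{Z'}X'-Z'))\to M(c(\rN_Z X)/c(\rN_Z X-Z))$; but the isomorphism $Z'\xrightarrow{\simeq}Z$ induces an isomorphism of pairs $(\rN_{Z'}X',\rN_{Z'}X'-Z')\xrightarrow{\simeq}(\rN_Z X,\rN_Z X-Z)$, so after applying $M\circ c$ it is an isomorphism.

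For the inductive step, assume $Z$ is not smooth. Since $k$ is perfect, $Z$ admits a dense open smooth subscheme $Z_0\in\Sm/k$, and $Z_1:=Z-Z_0$ is a reduced closed subscheme containing no generic point of $Z$, so $\dim Z_1<\dim Z$. Put $V:=X-Z_1$, so that $Z_0=Z\cap V$ is a smooth closed subscheme of $V$ with $V-Z_0=U$, and put $V':=f^{-1}(V)$. Applying $M\circ c$ to the filtrations $U\subseteq V\subseteq X$ and $U'\subseteq V'\subseteq X'$ produces a map of cofiber sequences
\[
\begin{tikzcd}
M(c(V')/c(U'))\ar[r]\ar[d,"a"] & M(c(X')/c(U'))\ar[r]\ar[d,"b"] & M(c(X')/c(V'))\ar[d,"c"]
\\
M(c(V)/c(U))\ar[r] & M(c(X)/c(U))\ar[r] & M(c(X)/c(V)).
\end{tikzcd}
\]
Here $a$ is an isomorphism, since it expresses that $M(c(-))$ of the Nisnevich distinguished square with vertices $U',V',U,V$ is cocartesian and its reduced closed complement $Z_0$ is smooth, so the base case applies; and $c$ is an isomorphism, since it expresses that $M(c(-))$ of the Nisnevich distinguished square with vertices $V',X',V,X$ is cocartesian and its reduced closed complement is $Z_1$ with $\dim Z_1<\dim Z$, so the induction hypothesis applies. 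Hence $b$ is an isomorphism, completing the induction.

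The step I expect to require the most care is the naturality invoked in the base case: Proposition \ref{omega.2} is stated only as the assertion that certain maps are isomorphisms, so one must confirm that these maps really are $M\circ c$ applied to the canonical fibre inclusions above — equivalently, that the proof of Proposition \ref{omega.2}, which passes through a compactification $Y$ of $X$ with $\ol Z$ strict normal crossing with $\partial Y$, yields a zig-zag that can be described intrinsically in terms of $c$ and the scheme-level deformation and normal-bundle constructions. Granting this, naturality with respect to $f$ is automatic, since the fibre inclusions commute with flat base change along $f$. The remaining points are routine: each scheme occurring ($V$, $V'$, $Z_0$, and the various complements) lies in $\Sm/k$ because it is open, or smooth and locally closed, in something in $\Sm/k$; and each auxiliary square is Nisnevich distinguished because $U'=f^{-1}(U)$ and $f$ restricts to an isomorphism over each reduced closed subscheme of $Z$.
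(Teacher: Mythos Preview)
Your argument is correct, and it takes a genuinely different route from the paper. The paper first applies embedded resolution to the complement $Z\subset X$: it produces a tower of blow-ups $X_n\to\cdots\to X_0=X$ along smooth centers contained in $Z$ until the complement becomes a strict normal crossing divisor, and invokes Proposition~\ref{omega.4} at each step to see that $M(c(X)/c(X'))$ is unchanged along the tower; it then chooses a proper compactification $Y\in\SmlSm/k$ with $Y-U$ strict normal crossing, and peels off one smooth divisor at a time to reduce to the case where $Z$ is a smooth \emph{divisor}, where Proposition~\ref{omega.2} finishes the proof exactly as in your base case. By contrast, you stratify $Z$ by its smooth locus and induct on $\dim Z$, applying Proposition~\ref{omega.2} directly to smooth closed subschemes of arbitrary codimension. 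Your approach is lighter: it bypasses Proposition~\ref{omega.4} and the explicit embedded resolution of $Z$ in $X$, using only the perfection of $k$ for the stratification (resolution still enters, of course, through the construction of $c$ and inside the proof of Proposition~\ref{omega.2}). The naturality concern you flag is legitimate but harmless: the zig-zag in Proposition~\ref{omega.2} is, as stated, $M\circ c$ applied to the scheme-level fibre inclusions $X\hookrightarrow \rD_Z^{\A^1}X\hookleftarrow \rN_Z X$, and $c$ is a functor on $\Sm/k$, so the required commutative ladder is automatic; indeed, the paper uses the very same commutative square without further comment in its final paragraph.
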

\begin{proof}
By resolution of singularities,
there exist morphisms $X_n\to X_{n-1}\to \cdots \to X_0 := X$ in $\Sm/k$ satisfying the following conditions:
\begin{enumerate}
\item[(i)] $X_{i+1}$ is the blow-up along a smooth center $Z_i$ in $X_i$ for $0\leq i<n$.
\item[(ii)] The induced morphism $U\times_X X_{i+1}\to U\times_X X_i$ is an isomorphism for $0\leq i<n$.
\item[(iii)] The complement of $U$ in $X_n$ is a strict normal crossing divisor.
\end{enumerate}
Consider the induced cartesian square
\[
Q_i
:=
\begin{tikzcd}
Z_i\times_{X_i}X_{i+1}\ar[d]\ar[r]&
X_{i+1}\ar[d]
\\
Z_i\ar[r]&
X_i.
\end{tikzcd}
\]
Since the induced morphism of schemes $f^{-1}(X-U)\to X-U$ is an isomorphism with the reduced scheme structures,
we have an isomorphism $Z_i\times_{X} X'\simeq Z_i$.
Apply Proposition \ref{omega.4} to $Q_i$ and $Q_i\times_X X'$ to obtain an isomorphism
\[
M(c(X_i)/c(X_{i+1}))
\simeq
M(c(X'\times_X X_i)/c(X'\times_X X_{i+1})).
\]
From this,
we obtain isomorphisms
\[
M(c(X)/c(X'))
\simeq
M(c(X_1)/c(X'\times_X X_1))
\simeq
\cdots
\simeq
M(c(X_n)/c(X'\times_X X_n)).
\]
Hence to show that $M(c(Q))$ is cocartesian,
it suffices to show that $M(c(Q\times_X X_n))$ is cocartesian.
Replace $X$ with $X_n$ to reduce to the case when the complement of $U$ in $X$ is a strict normal crossing divisor.

By resolution of singularities,
there exists proper $Y\in \SmlSm/k$ such that $Y-\partial Y\simeq X$ and $Y-U$ is a strict normal crossing divisor $\partial Y + W_1+\cdots +W_m$.
We set $U_i:=X-(W_1\cup \cdots \cup W_i)\cap X$ and $Y_i:=(\ul{Y},\partial Y+W_1+\cdots+W_i)$ for integers $0\leq i\leq n$.
Observe that we have an isomorphism $Y_i\simeq c(U_i)$ in $\lSm/k[\Adm^{-1}]$.
The induced square
\[
\begin{tikzcd}
U_{i+1}\times_X X'\ar[r]\ar[d]&
U_i\times_X X'\ar[d]
\\
U_{i+1}\ar[r]&
U_i
\end{tikzcd}
\]
is a Nisnevich distinguished square in $\Sm/k$ for $0\leq i<n$.
Replace $U\to X$ with $U_{i+1}\to U_i$ to reduce to the case when the complement of $U$ in $X$ is a smooth divisor $Z$.

We set $Z':=Z\times_X X'$,
which is isomorphic to $Z$.
We have an induced commutative square
\[
\begin{tikzcd}
M(c(X)/c(U))\ar[d]\ar[r,"\simeq"]&
M(c(\rN_Z X)/c(\rN_Z X-Z))\ar[d]
\\
M(c(X')/c(U'))\ar[r,"\simeq"]&
M(c(\rN_{Z'} X')/c(\rN_{Z'} X'-Z'))
\end{tikzcd}
\]
whose horizontal arrows are isomorphisms by Proposition \ref{omega.2}.
The right vertical arrow is an isomorphism since $\rN_Z X\simeq \rN_{Z'} X'$.
Hence the left vertical arrow is an isomorphism,
i.e.,
$M(Q)$ is cocartesian.
\end{proof}

\section{Proof of Theorem \ref{omega.6}}

For an $\infty$-category $\cC$ with a class of morphisms $A$ in $\cC$, let $A^{-1}\cC$ be the full subcategory of $\cC$ consisting of $\cA$-local objects \cite[\S 5.5.4]{HTT}.
We have the localization functor $L_A\colon \cC\to A^{-1}\cC$,
whose right adjoint $\iota_A\colon A^{-1}\cC\to \cC$ is the inclusion functor.

\begin{lem}
\label{omega.10}
Let $F: \cC\rightleftarrows \cD: G$ be an adjunction between presentable $\infty$-categories such that $G$ preserves colimits too,
and let $A$ and $B$ be classes of morphisms in $\cC$ and $\cD$.
If $L_B F$ maps every morphism in $A$ to an isomorphism and $L_A G$ maps every morphism in $B$ to an isomorphism,
then there is an induced adjoint colimit preserving functors $F: A^{-1}\cC \rightleftarrows B^{-1}\cD: G$ such that the two squares in the diagram
\[
\begin{tikzcd}
\cC\ar[d,"L_A"']\ar[r,"F",shift left=0.5ex]\ar[r,leftarrow,"G"',shift right=0.5ex]&
\cD\ar[d,"L_B"]
\\
A^{-1}\cC
\ar[r,"F",shift left=0.5ex]\ar[r,leftarrow,"G"',shift right=0.5ex]&
B^{-1}\cD
\end{tikzcd}
\]
commutes.
\end{lem}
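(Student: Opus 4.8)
The plan is to produce the functors from the universal property of Bousfield localization and then identify the right adjoint by a direct mapping-space computation. Recall from \cite[\S 5.5.4]{HTT} that $A^{-1}\cC$ and $B^{-1}\cD$ are presentable, that $\iota_A$ and $\iota_B$ are fully faithful (so $L_A\iota_A\simeq\id$ and $L_B\iota_B\simeq\id$), and that a morphism of $\cC$ (resp.\ of $\cD$) is inverted by $L_A$ (resp.\ by $L_B$) exactly when it is an $A$-local (resp.\ $B$-local) equivalence, i.e.\ becomes an equivalence of spaces after applying $\Hom_\cC(-,X)$ for every $A$-local $X$ (resp.\ $\Hom_\cD(-,Y)$ for every $B$-local $Y$).

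The first and most substantive step is to reformulate the hypothesis on $F$. Saying that $L_BF$ sends each morphism of $A$ to an isomorphism is the same as saying that $F$ sends each morphism of $A$ to a $B$-local equivalence; transporting this across the natural equivalence $\Hom_\cD(F(-),Y)\simeq\Hom_\cC(-,GY)$ coming from $F\dashv G$, it becomes the statement that $GY$ is $A$-local for every $B$-local object $Y$.

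Next I would construct the functors. The composite $L_BF\colon\cC\to B^{-1}\cD$ preserves colimits and inverts $A$ by the first hypothesis, so by the universal property of $L_A$ (\cite[\S 5.5.4]{HTT}) it factors as $\overline F\circ L_A$ for an essentially unique colimit-preserving functor $\overline F\colon A^{-1}\cC\to B^{-1}\cD$; in particular $\overline FX\simeq\overline FL_A\iota_AX\simeq L_BF\iota_AX$ for $X\in A^{-1}\cC$. Since $G$ preserves colimits, $L_AG\colon\cD\to A^{-1}\cC$ likewise preserves colimits, and it inverts $B$ by the second hypothesis, so it factors as $\overline G\circ L_B$ for an essentially unique colimit-preserving functor $\overline G\colon B^{-1}\cD\to A^{-1}\cC$; here $\overline GY\simeq\overline GL_B\iota_BY\simeq L_AG\iota_BY$ for $Y\in B^{-1}\cD$, and since $G\iota_BY$ is already $A$-local by the reformulation above this yields $\iota_A\overline GY\simeq G\iota_BY$. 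By construction $\overline FL_A\simeq L_BF$ and $\overline GL_B\simeq L_AG$, which is the asserted commutativity of the two squares, and $\overline F$ and $\overline G$ both preserve colimits; these will be the functors $F,G$ of the statement once the adjunction has been checked.

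Finally, to obtain the adjunction I would invoke the mapping-space criterion. For $X\in A^{-1}\cC$ and $Y\in B^{-1}\cD$ there is a chain of equivalences, natural in both variables,
\[
\Hom_{B^{-1}\cD}(\overline FX,Y)
\simeq\Hom_\cD(F\iota_AX,\iota_BY)
\simeq\Hom_\cC(\iota_AX,G\iota_BY)
\simeq\Hom_{A^{-1}\cC}(X,\overline GY),
\]
using, in order, the identification $\overline FX\simeq L_BF\iota_AX$ together with $L_B\dashv\iota_B$, then $F\dashv G$, then the full faithfulness of $\iota_A$ together with $\iota_A\overline GY\simeq G\iota_BY$; by the mapping-space characterization of adjunctions (\cite[\S 5.2]{HTT}) this exhibits $\overline F$ as a left adjoint of $\overline G$. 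The step I expect to be the main obstacle is the reformulation of the hypothesis on $F$: one must pass carefully between ``$L_BF$ inverts $A$'', ``$F$ takes morphisms of $A$ to $B$-local equivalences'', and ``$G$ takes $B$-local objects to $A$-local objects'', keeping track of which of these concerns mapping into a local object and which concerns mapping out of a localization. Once that dictionary is in place the rest is formal, with the remaining care going into checking that the displayed chain of equivalences is natural in $X$ and $Y$ simultaneously rather than merely objectwise.
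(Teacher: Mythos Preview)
Your proof is correct and follows essentially the same strategy as the paper: both construct the two functors $\overline F$ and $\overline G$ separately via the universal property of localization \cite[Proposition 5.5.4.20]{HTT}, and then verify that they are adjoint. The only difference is in this last verification: the paper takes the abstract right adjoint $G'$ of $\overline F$ (which exists by the adjoint functor theorem), passes to right adjoints in $L_BF\simeq \overline F L_A$ to obtain $G\iota_B\simeq \iota_A G'$, and concludes $G'\simeq L_AG\iota_B\simeq \overline G$; you instead unfold the same identity $\iota_A\overline G\simeq G\iota_B$ explicitly (via your reformulation that $G$ carries $B$-local objects to $A$-local objects) and feed it into a direct mapping-space computation. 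Your route is slightly more self-contained, while the paper's is a line shorter; the content is the same.
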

\begin{proof}
The assumption on $L_B F$ implies that there exists a unique colimit preserving functor $F\colon A^{-1}\cC\to B^{-1}\cD$ such that $L_BF\simeq FL_A$ by \cite[Proposition 5.5.4.20]{HTT}.
Let $G'\colon B^{-1}\cD\to A^{-1}\cC$ be its right adjoint.
Similarly,
the assumption on $L_A G$ implies that there exists a unique colimit preserving functor $G\colon B^{-1}\cD\to A^{-1}\cC$ such that $L_AG\simeq GL_B$.
Since $L_B\iota_B\simeq \id$,
we obtain $L_A G\iota_B \simeq G$.
From  $L_BF\simeq FL_A$,
we obtain $G\iota_B\simeq \iota_A G'$.
Since $L_A\iota_A\simeq \id$,
we obtain $L_A G\iota_B\simeq G'$.
Hence we obtain $G\simeq G'$.
\end{proof}

\begin{lem}
\label{omega.12}
Let $\cC$ be a category,
and let $W$ be a class of morphisms in $\cC$.
If $W$ admits calculus of left or right fractions in $\cC$ and $W$ satisfies the two-out-of-three property in $\cC$,
then there is a natural equivalence of $\infty$-categories
\[
\rN(\cC[W^{-1}])
\simeq
\rN(\cC)[W^{-1}],
\]
where $\cC[W^{-1}]$ in the left-hand side is the $1$-categorical localization.
\end{lem}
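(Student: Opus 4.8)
The plan is to produce a canonical comparison functor out of the universal property and then prove it is an equivalence by computing mapping spaces. The $1$-categorical localization $\gamma\colon \cC\to\cC[W^{-1}]$ inverts $W$, so $\rN(\gamma)\colon\rN(\cC)\to\rN(\cC[W^{-1}])$ carries $W$ to equivalences and therefore factors, uniquely up to equivalence, through a functor
\[
\Phi\colon \rN(\cC)[W^{-1}]\longrightarrow \rN(\cC[W^{-1}]).
\]
Neither the $\infty$-categorical nor the $1$-categorical localization changes the set of objects, and $\Phi$ is the identity on objects, so $\Phi$ is essentially surjective and it remains to show it is fully faithful. Since $\rN(\cC[W^{-1}])$ is the nerve of an ordinary category, all of its mapping spaces are discrete; hence the whole statement reduces to showing that for all objects $X,Y$ the space $\Hom_{\rN(\cC)[W^{-1}]}(X,Y)$ is discrete and that $\Phi$ identifies its set of path components with $\Hom_{\cC[W^{-1}]}(X,Y)$.

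For this I would bring in the calculus of fractions. By symmetry it suffices to treat a calculus of right fractions: the left case follows by applying the result to $\cC^{\mathrm{op}}$ and $W^{\mathrm{op}}$, since forming the nerve and forming the localization both commute with passage to opposite categories, and two-out-of-three is self-dual. Fix $X$ and let $W/X$ be the category of arrows $s\colon X'\to X$ in $W$, with morphisms the maps over $X$. The first point is that $(W/X)^{\mathrm{op}}$ is filtered: it is nonempty because $\id_X\in W$; any two objects have a common lower bound by the Ore condition applied to the resulting cospan, where two-out-of-three is exactly what is needed to see that the new leg again lies in $W$; and two parallel arrows are coequalized by the remaining calculus-of-fractions axiom. (This use of two-out-of-three is essential.) Gabriel--Zisman's description of a category of right fractions then gives a natural isomorphism
\[
\Hom_{\cC[W^{-1}]}(X,Y)\;\cong\;\colim_{(s\colon X'\to X)\in(W/X)^{\mathrm{op}}}\Hom_{\cC}(X',Y),
\]
a filtered colimit of sets, with transition maps given by precomposition.

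The main obstacle, and the only non-formal input, is the $\infty$-categorical counterpart of this formula: a natural equivalence
\[
\Hom_{\rN(\cC)[W^{-1}]}(X,Y)\;\simeq\;\colim_{(s\colon X'\to X)\in(W/X)^{\mathrm{op}}}\Hom_{\cC}(X',Y),
\]
where now the colimit is taken in the $\infty$-category of spaces, over the nerve of the filtered category $(W/X)^{\mathrm{op}}$, of the \emph{discrete} spaces $\Hom_{\cC}(X',Y)$. I would derive this either by identifying $\rN(\cC)[W^{-1}]$ with the Dwyer--Kan hammock localization $L^{H}(\cC,W)$ and using Dwyer--Kan's computation of the latter for a category with a calculus of fractions, or by a direct cofinality/fibrant-replacement argument for the marked simplicial set $(\rN(\cC),W)$; one cannot expect a shortcut that bypasses this, since without the calculus-of-fractions hypothesis $\rN(\cC)[W^{-1}]$ need not even be $1$-truncated. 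Granting the formula, the proof concludes formally: a filtered colimit of discrete spaces is discrete, with components the colimit of the underlying sets, so $\Hom_{\rN(\cC)[W^{-1}]}(X,Y)$ is discrete; comparing the constructions term by term shows that the map induced by $\Phi$ is the canonical comparison of the two colimits, hence a bijection. Thus $\Phi$ is fully faithful, and the desired equivalence $\rN(\cC[W^{-1}])\simeq\rN(\cC)[W^{-1}]$ follows.
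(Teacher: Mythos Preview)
Your argument is correct, and its essential external input is the same as the paper's, but the packaging differs. The paper argues by a direct chain of equivalences: Dwyer--Kan's result (for a calculus of fractions with two-out-of-three) says the hammock localization $L^H(\cC,W)$ is weakly equivalent to the ordinary category $\cC[W^{-1}]$; Hinich's comparison identifies $\rN(\cC)[W^{-1}]$ with the homotopy-coherent nerve of $L^H(\cC,W)$; and Lurie's Remark~1.3.4.2 in \emph{Higher Algebra} handles the passage through nerves. Done. You instead build the comparison functor $\Phi$ explicitly, reduce to a mapping-space computation, write both sides as the same filtered colimit of discrete spaces, and compare. The key step where you say ``I would derive this either by identifying $\rN(\cC)[W^{-1}]$ with the Dwyer--Kan hammock localization \ldots'' is precisely the paper's entire proof; once you invoke that, your surrounding scaffolding (constructing $\Phi$, checking essential surjectivity, verifying that filtered colimits of discrete spaces are discrete) becomes redundant, since the Dwyer--Kan statement already asserts that $L^H(\cC,W)$ is equivalent as a simplicial category to the discrete category $\cC[W^{-1}]$, not merely that its hom-spaces have the right $\pi_0$. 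What your write-up buys is pedagogical transparency: it makes visible exactly where the calculus of fractions enters (filteredness of $(W/X)^{\mathrm{op}}$) and why two-out-of-three is needed. What the paper's version buys is brevity: three citations and no unpacking.
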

\begin{proof}
Dwyer-Kan \cite[\S 7]{MR0578563} show that the hammock localization $L^H(\cC,W)$ is weak equivalent to $\cC[W^{-1}]$.
We regard the $1$-categories $\cC$ and $W$ as fibrant simplicial categories (i.e., the hom spaces are Kan complexes).
We obtain the desired natural equivalence using \cite[Remark 1.3.4.2]{HA} and Hinich's comparison \cite[Proposition 1.2.1]{MR3460765} between the $\infty$-categorical localization and the hammock localization.
\end{proof}

Let $\Sp$ denote the $\infty$-category of spectra.
For a category $\cC$,
let $\PSh(\cC,\Sp)$ denote the $\infty$-category of presheaves of spectra on $\cC$.
For $X\in \cC$,
let $\Sigma_{S^1}^\infty X_+\in \PSh(\cC,\Sp)$ be the presheaf represented by $X$.
If $\cC$ has a topology,
then let $\Sh_t(\cC,\Sp)$ denote the $\infty$-category of $t$-sheaves of spectra on $\cC$.

\begin{lem}
\label{omega.9}
There is an equivalence of $\infty$-categories
\[
\PSh(\lSm/k[\Adm^{-1}],\Sp)
\simeq
\Adm^{-1}\PSh(\lSm/k,\Sp).
\]
\end{lem}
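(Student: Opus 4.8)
The plan is to exhibit both sides as the same full subcategory of $\PSh(\lSm/k,\Sp)$, namely the presheaves that invert every admissible blow-up.

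First I would unwind the right-hand side. Since $\lSm/k$ is a $1$-category we have $\PSh(\lSm/k,\Sp)=\Fun(\rN(\lSm/k)^{\mathrm{op}},\Sp)$, and in the expression $\Adm^{-1}\PSh(\lSm/k,\Sp)$ the class $\Adm$ is regarded as a set of morphisms of $\PSh(\lSm/k,\Sp)$ via the functor $X\mapsto \Sigma_{S^1}^\infty X_+$. By the Yoneda lemma a presheaf $F$ is local with respect to $\Sigma_{S^1}^\infty (Y_1)_+\to \Sigma_{S^1}^\infty (Y_2)_+$ precisely when $F(Y_2)\to F(Y_1)$ is an equivalence, so $\Adm^{-1}\PSh(\lSm/k,\Sp)$ is exactly the full subcategory of $\Fun(\rN(\lSm/k)^{\mathrm{op}},\Sp)$ spanned by the functors sending every morphism of $\Adm$ to an equivalence. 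As $\lSm/k$ is essentially small, $\Adm$ can be taken to be a small set of maps, so by \cite[\S 5.5.4]{HTT} this is an accessible Bousfield localization and in particular presentable.

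For the left-hand side I would apply Lemma \ref{omega.12} to $\cC=\lSm/k$ and $W=\Adm$. Its hypotheses hold: $\Adm$ admits calculus of right fractions by \cite[Proposition 7.6.6]{logDM}, and it satisfies the two-out-of-three property, which follows from the identity $\omega(gf)=\omega(g)\omega(f)$ together with the standard cancellation properties of proper morphisms of schemes and the fact that an admissible blow-up is surjective (being proper with dense image). Thus Lemma \ref{omega.12} gives $\rN(\lSm/k[\Adm^{-1}])\simeq \rN(\lSm/k)[\Adm^{-1}]$ with the $\infty$-categorical localization on the right. Passing to opposite categories, which is compatible with localization, and using the definition of presheaf categories, we obtain
\[
\PSh(\lSm/k[\Adm^{-1}],\Sp)\simeq \Fun\bigl(\rN(\lSm/k)^{\mathrm{op}}[\Adm^{-1}],\Sp\bigr).
\]

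Finally I would invoke the universal property of $\infty$-categorical localization \cite[Remark 1.3.4.2]{HA}: for any $\infty$-category $\cE$, precomposition with $\rN(\lSm/k)^{\mathrm{op}}\to \rN(\lSm/k)^{\mathrm{op}}[\Adm^{-1}]$ is fully faithful with essential image the functors inverting $\Adm$, so $\Fun(\rN(\lSm/k)^{\mathrm{op}}[\Adm^{-1}],\cE)$ is identified with the full subcategory of $\Fun(\rN(\lSm/k)^{\mathrm{op}},\cE)$ of $\Adm$-inverting functors. Taking $\cE=\Sp$ and comparing with the description of $\Adm^{-1}\PSh(\lSm/k,\Sp)$ above yields the claimed equivalence. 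No step presents a serious obstacle: the only points needing care are the verification of the two-out-of-three property for $\Adm$ and the Yoneda bookkeeping identifying the notion of $\Adm$-local object of \cite[\S 5.5.4]{HTT} with the condition of inverting $\Adm$, both of which are routine.
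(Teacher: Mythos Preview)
Your proof is correct and follows essentially the same approach as the paper: both identify each side with the full subcategory of $\PSh(\lSm/k,\Sp)$ consisting of presheaves inverting $\Adm$, via Lemma~\ref{omega.12} and the universal property of $\infty$-categorical localization. You simply spell out in more detail the Yoneda identification of $\Adm$-local objects and the verification of two-out-of-three for $\Adm$, which the paper asserts without justification.
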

\begin{proof}
The left-hand side is $\Fun(\rN (\lSm/k[\Adm^{-1}])^{op},\Sp)$.
Observe that $\Adm$ admits calculus of right fractions in $\cC$ by \cite[Proposition 7.6.6]{logDM} and satisfies the two-out-of-three property in $\cC$.
By Lemma \ref{omega.12} and the definition of the $\infty$-categorical localization \cite[Definition 1.3.4.1]{HA},
$\Fun(\rN(\lSm/k[\Adm^{-1}])^{op},\Sp)$ is equivalent to the full subcategory of $\Fun(\rN(\lSm/k)^{op},\Sp)\simeq \PSh(\lSm/k,\Sp)$ consisting of functors $F\colon \rN(\lSm/k)^{op}\to \Sp$ such that $F(u)$ is an isomorphism of spectra for every admissible blow-up $u$ in $\lSm/k$.
From this description,
we obtain the desired equivalence of $\infty$-categories.
\end{proof}

We have the localization functors
\begin{gather*}
L_{Nis,\A^1}
\colon
\PSh(\Sm/k,\Sp)
\to
(\A^1)^{-1}\Sh_{Nis}(\Sm/k,\Sp)
=:
\SH_{S^1}(k),
\\
L_{sNis,\square}
\colon
\Adm^{-1}
\PSh(\lSm/k,\Sp)
\to
(\Adm\cup\square)^{-1}\Sh_{sNis}(\lSm/k,\Sp)
=:
\logSH_{S^1}(k),
\end{gather*}
where $\A^1$ (resp.\ $\square$) denotes the class of projections $X\times \A^1\to X$ (resp.\ $X\times \square\to X$) in $\Sm/k$ (resp.\ $\lSm/k$).

\begin{thm}
\label{omega.11}
Let $k$ be a perfect field admitting resolution of singularities.
There exist adjoint functors
\[
\omega_\sharp
:
\logSH_{S^1}(k)
\rightleftarrows
\SH_{S^1}(k)
:
\omega^* 
\]
satisfying the following properties:
\begin{enumerate}
\item[\textup{(1)}]
$\omega_\sharp (\Sigma_{S^1}^\infty Y_+)\simeq \Sigma_{S^1}^\infty (Y-\partial Y)_+$ for $Y\in \lSm/k$.
\item[\textup{(2)}]
$\omega^*(\Sigma_{S^1}^\infty (Y-\partial Y)_+)\simeq \Sigma_{S^1}^\infty Y_+$ for proper $Y\in \lSm/k$,
\item[\textup{(3)}]
$\omega_\sharp$ preserves colimits and is monoidal.
\item[\textup{(4)}]
$\omega^*$ preserves colimits and is monoidal and fully faithful.
\item[\textup{(5)}]
The essential image of $\omega^*$ is the full subcategory spanned by $\A^1$-local objects of $\logSH_{S^1}(k)$.
\end{enumerate}
\end{thm}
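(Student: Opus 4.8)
\textbf{The plan} is to construct $(\omega_\sharp,\omega^*)$ first on presheaves of spectra and then to push it through the two localizations using Lemma~\ref{omega.10}; properties (1)--(4) will be essentially formal, and (5) will need one genuinely geometric input. At the presheaf level, the $1$-categorical adjunction $\omega\colon\lSm/k[\Adm^{-1}]\rightleftarrows\Sm/k\colon c$ constructed after Proposition~\ref{omega.1} induces, after composing with the Yoneda embeddings and extending by colimits, an adjunction
\[
\omega_!\colon\PSh(\lSm/k[\Adm^{-1}],\Sp)\rightleftarrows\PSh(\Sm/k,\Sp)\colon\omega^*,
\]
with $\omega_!(\Sigma_{S^1}^\infty Y_+)\simeq\Sigma_{S^1}^\infty(Y-\partial Y)_+$. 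Since $\omega$ is left adjoint to $c$, the restriction functor $\omega^*$ along $\omega$ agrees on representables with left Kan extension along $c$, so $\omega^*$ preserves colimits and $\omega^*(\Sigma_{S^1}^\infty X_+)\simeq\Sigma_{S^1}^\infty c(X)_+$; in particular $\omega^*(\Sigma_{S^1}^\infty(Y-\partial Y)_+)\simeq\Sigma_{S^1}^\infty Y_+$ for proper $Y$. Both $\omega$ and $c$ are symmetric monoidal for the cartesian products ($\omega(X_1\times X_2)\simeq\omega X_1\times\omega X_2$ because $\partial(X_1\times X_2)=\partial X_1\times X_2\cup X_1\times\partial X_2$, and $c(X_1\times X_2)\simeq c(X_1)\times c(X_2)$ because a product of proper log smooth compactifications is again one), so $\omega_!$ and $\omega^*$ are symmetric monoidal for Day convolution; and $c$ is fully faithful by Proposition~\ref{omega.1}, hence so is $\omega^*$, i.e.\ the counit $\omega_!\omega^*\to\id$ is an equivalence, as one sees on representables from $\omega(c(X))\simeq X$. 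By Lemma~\ref{omega.9} the source of $\omega_!$ is $\Adm^{-1}\PSh(\lSm/k,\Sp)$.

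Next I would descend the adjunction using Lemma~\ref{omega.10}, taking $F=\omega_!$, $G=\omega^*$, letting $A$ be the union of the generating maps for strict Nisnevich descent, the class $\square$ of $\square$-projections, and $\Adm$, and letting $B$ be the union of Nisnevich descent and the class $\A^1$ of $\A^1$-projections; $G$ preserves colimits by the previous step. First, $L_B\omega_!$ inverts $A$: $\omega$ carries a strict Nisnevich distinguished square to a Nisnevich distinguished square in $\Sm/k$ (restrict the strict \'etale map and the strict open immersion to their trivial-locus parts), an admissible blow-up to an isomorphism by definition, and a $\square$-projection $Y\times\square\to Y$ to the $\A^1$-projection $(Y-\partial Y)\times\A^1\to Y-\partial Y$; all three become equivalences after $L_{Nis,\A^1}$. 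Second, $L_A\omega^*$ inverts $B$: since $\sT(k):=\logSH_{S^1}(k)$ satisfies the standing hypotheses of Section~\ref{omega.14} (strict Nisnevich descent and $\square$-invariance are built in, and dividing descent holds because inverting $\Adm$ already inverts every dividing cover, cf.\ \cite{logSH}), the functor $M$ of Section~\ref{omega.14} with target $\sT(k)$ is available, and $\omega^*$ sends a Nisnevich distinguished square $Q$ in $\Sm/k$ to $M(c(Q))$, which is cocartesian by Proposition~\ref{omega.5}, and sends the $\A^1$-projection $X\times\A^1\to X$ to $M(c(X\times\A^1))\to M(c(X))$, which for proper $Y\in\SmlSm/k$ with $Y-\partial Y\simeq X$ equals $M(Y\times\square)\to M(Y)$, an isomorphism by $\square$-invariance. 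Lemma~\ref{omega.10} then yields the adjunction $\omega_\sharp\colon\logSH_{S^1}(k)\rightleftarrows\SH_{S^1}(k)\colon\omega^*$ and the commuting square; properties (1) and (2) read off from the square and the formulas above, and for (3) and (4) we note that $\omega_\sharp$ preserves colimits as a left adjoint while $\omega^*$ does by Lemma~\ref{omega.10}, that the localizing classes are stable under tensoring with representables so the localizations are symmetric monoidal and $\omega_!,\omega^*$ descend to symmetric monoidal functors, and that $\omega^*$ is fully faithful because $\omega_\sharp\omega^*\circ L_B\simeq L_B\omega_!\omega^*\simeq L_B$ with $L_B$ essentially surjective, forcing the counit $\omega_\sharp\omega^*\to\id$ to be an equivalence.

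Finally, for property (5), write $\logSH_{S^1}(k)^{\A^1}:=(\A^1)^{-1}\logSH_{S^1}(k)$, the reflective subcategory of objects local with respect to the projections $X\times\A^1\to X$ (for $X\in\lSm/k$, with $\A^1$ the affine line carrying the trivial log structure). That the essential image of $\omega^*$ lies among the $\A^1$-local objects is formal: $\omega_\sharp$ sends each such projection to $\Sigma_{S^1}^\infty((Y-\partial Y)\times\A^1)_+\to\Sigma_{S^1}^\infty(Y-\partial Y)_+$, an equivalence in $\SH_{S^1}(k)$, so by adjunction every $\omega^*\cG$ is local with respect to all of them. For the reverse inclusion it suffices to show that for every $Y\in\lSm/k$ the strict open immersion $(Y-\partial Y,\mathrm{triv})\to Y$, whose image under $\omega$ is the identity, becomes an equivalence in $\logSH_{S^1}(k)^{\A^1}$: applying this both to $Y$ and to a proper log smooth compactification of $Y-\partial Y$ identifies each generator of $\logSH_{S^1}(k)^{\A^1}$ with an object of the essential image of $\omega^*$, which is colimit-closed (as $\omega^*$ is fully faithful and colimit-preserving) and hence exhausts $\logSH_{S^1}(k)^{\A^1}$; together with the easy inclusion and the full faithfulness of $\omega^*$, this is exactly (5). \textbf{The hard part} is this remaining claim, that deleting a boundary divisor logarithmically is invisible to $\logSH_{S^1}(k)^{\A^1}$. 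I would reduce it, along admissible blow-ups, to $Y\in\SmlSm/k$ and induct on the number of components of $\partial Y$ so as to treat a single smooth component; the assertion being Zariski-local, one passes to the model $\A_\N^a\times\A^b\times S$, and then the deformation-to-the-normal-bundle and blow-up results of Section~\ref{omega.14} (Propositions~\ref{omega.7} and~\ref{omega.2}, and the computations in their proofs), applied now with $\sT(k)=\logSH_{S^1}(k)^{\A^1}$, reduce the matter to the single case that $\G_m\hookrightarrow\A_\N=(\A^1,\{0\})$ is an equivalence in $\logSH_{S^1}(k)^{\A^1}$---which holds because the associated logarithmic Thom object collapses once $\A^1$ is inverted (consistently with $\omega$ sending that open immersion to the identity). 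Carrying out this d\'evissage compatibly with all four localizations, tracking the log structures through the blow-ups and deformation spaces of Section~\ref{omega.14}, is the technical heart of the argument.
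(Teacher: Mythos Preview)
Your treatment of (1)--(4) matches the paper's proof: start from the adjunction $\omega\dashv c$ built after Proposition~\ref{omega.1}, pass to presheaves of spectra (your $\omega^*$ at that level is the paper's $c_\sharp$, since precomposition with a left adjoint agrees with left Kan extension along its right adjoint), and descend through the localizations via Lemma~\ref{omega.10}, using Proposition~\ref{omega.5} for the Nisnevich compatibility of $c$; monoidality and full faithfulness follow from $\omega c\simeq\id$ exactly as you say and as the paper argues.

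For (5) the approaches diverge. The paper observes, as you do, that $\omega_\sharp$ factors through the $\A^1$-localization, and then simply invokes \cite[Proposition~2.5.7]{logA1} to conclude that $\omega^*\colon\SH_{S^1}(k)\to(\A^1)^{-1}\logSH_{S^1}(k)$ is an equivalence. You instead try to prove this directly by showing that every open immersion $Y-\partial Y\hookrightarrow Y$ becomes an equivalence after $\A^1$-localization, reducing to the base case $\G_m\hookrightarrow\A_\N$. But your justification of that base case---``the associated logarithmic Thom object collapses once $\A^1$ is inverted (consistently with $\omega$ sending that open immersion to the identity)''---is not an argument: it restates the claim, and appealing to the behaviour under $\omega$ is circular, since that is exactly what (5) packages. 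A genuine proof is short once seen: cover $(\P^1,0)\simeq\square$ by the strict Zariski opens $\A_\N=(\A^1,0)$ and $\A^1=\P^1\setminus\{0\}$, with intersection $\G_m$; in the $\A^1$-localization the bottom inclusion $\A^1\hookrightarrow\square$ is an equivalence (both sides project to the point and the triangle commutes), so by strict Nisnevich descent the top map $\G_m\hookrightarrow\A_\N$ is an equivalence too. Your reduction step is also only sketched, and invoking Propositions~\ref{omega.7} and~\ref{omega.2} is heavier than needed---after Zariski-localizing $Y$ to $\A_\N^m\times\A^l$, iterating the same cover trick in each $\A_\N$-factor already does the job. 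So your route to (5) is more self-contained than the paper's citation, but as written it has a real gap at the crucial base case.
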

\begin{proof}
We begin with the adjoint pair
\[
\omega
:
\lSm/k[\Adm^{-1}]
\rightleftarrows
:
\Sm/k
:
c.
\]
This induces the adjoint pair
\[
\omega_\sharp
:
\PSh(\lSm/k[\Adm^{-1}],\Sp)
\rightleftarrows
\PSh(\Sm/k,\Sp)
:
c_\sharp
\]
such that $\omega_\sharp (\Sigma_{S^1}^\infty Y_+)\simeq \Sigma_{S^1}^\infty (Y-\partial Y)_+$ for $Y\in \lSm/k$ and $c_\sharp(\Sigma_{S^1}^\infty (Y-\partial Y)_+)\simeq \Sigma_{S^1}^\infty Y_+$ for proper $Y\in \lSm/k$.
Together with Lemma \ref{omega.9},
we obtain the adjoint pair
\[
\omega_\sharp
:
\Adm^{-1}\PSh(\lSm/k,\Sp)
\rightleftarrows
\PSh(\Sm/k,\Sp)
:
c_\sharp.
\]

Observe that the functor
\[
L_{sNis,\square}\Sigma_{S^1}^\infty (-)
\colon
\rN(\lSm/k)
\to
\logSH_{S^1}(k)
\]
satisfies the conditions for $M$ in \S \ref{omega.14}.
Hence we can use Proposition \ref{omega.5} to deduce that
$L_{sNis,\square}c_\sharp\Sigma^\infty_{S^1}(-)_+$ sends every Nisnevich distinguished square to a cocartesian square.
Since $c$ sends $\A^1$ to $\square$ and $\omega$ sends dividing Nisnevich squares to Nisnevich squares and $\square$ to $\A^1$,
Lemma \ref{omega.10} yields the diagram
\[
\begin{tikzcd}
\Adm^{-1}\PSh(\lSm/k,\Sp)\ar[d,"L_{sNis,\square}"']\ar[r,"\omega_\sharp",shift left=0.5ex]\ar[r,leftarrow,"c_\sharp"',shift right=0.5ex]&
\PSh(\Sm/k,\Sp)\ar[d,"L_{Nis,\A^1}"]
\\
\logSH_{S^1}(k)
\ar[r,"\omega_\sharp",shift left=0.5ex]\ar[r,leftarrow,"\omega^*"',shift right=0.5ex]&
\SH_{S^1}(k)
\end{tikzcd}
\]
such that the two squares in this diagram commute and the functors $\omega_\sharp$ and $\omega^*$ in the lower horizontal row preserve colimits.
These two functors satisfy (1) and (2).

For $Y_1,Y_2\in \lSm/k$,
there is a natural isomorphism
\[
(Y_1-\partial Y_1)\times (Y_2-\partial Y_2)
\simeq
(Y_1\times Y_2)-\partial (Y_1\times Y_2).
\]
This implies that the functor $c\colon \Sm/k\to \lSm/k[\Adm^{-1}]$ preserves products.
Hence the functor $c_\sharp \colon \PSh(\Sm/k,\Sp)\to \Adm^{-1}\PSh(\lSm/k,\Sp)$ is monoidal.
Together with \cite[Example 1.3.4.3, Proposition 4.1.7.4]{HA},
we deduce that $\omega^*\colon \SH_{S^1}(k)\to \logSH_{S^1}(k)$ is monoidal.
We can similarly show that $\omega_\sharp \colon \logSH_{S^1}(k)\to \SH_{S^1}(k)$ is monoidal.
Since $\omega(c(Y))\simeq Y$ for $Y\in \lSm/k$,
we have $\omega_\sharp \omega^*\simeq \id$.
Hence $\omega^*$ is fully faithful,
so we have (3) and (4).

Since the functor $\omega\colon \lSm/k\to \Sm/k$ sends $\A^1$ to $\A^1$,
Lemma \ref{omega.10} yields the diagram
\[
\begin{tikzcd}
\logSH_{S^1}(k)
\ar[d,"L_{\A^1}"']\ar[r,"\omega_\sharp",shift left=0.5ex]\ar[r,leftarrow,"\omega^*"',shift right=0.5ex]&
\SH_{S^1}(k)
\\
(\A^1)^{-1}\logSH_{S^1}(k)
\ar[ru,"\omega_\sharp",shift left=0.5ex,bend right]\ar[ru,leftarrow,"\omega^*"',shift right=0.5ex,bend right]
\end{tikzcd}
\]
such that the two triangles commute.
We have $\iota_{\A^1}\omega^*\simeq \omega^*$ since $\omega_\sharp \simeq \omega_\sharp L_{\A^1}$,
where
\[
\iota_{\A^1}\colon (\A^1)^{-1}\logSH(k)\to \logSH(k)
\]
is the inclusion functor.
By \cite[Proposition 2.5.7]{logA1},
the functor $\omega^*\colon \SH_{S^1}(k)\to (\A^1)^{-1}\logSH_{S^1}(k)$ is an equivalence of $\infty$-categories.
This implies (5).
\end{proof}

\begin{proof}[Proof of Theorem \ref{omega.6}]
We can define $\SH(k)$ and $\logSH(k)$ by $\otimes$-inverting $\Sigma_{S^1}^\infty \P^1/1$ in  $\SH_{S^1}(k)$ and $\logSH_{S^1}(k)$ using \cite[Definition 2.6]{zbMATH06374152},
where
\[
\Sigma_{S^1}^\infty \P^1/1:=\cofib(\Sigma_{S^1}^\infty \{1\}_+\to \Sigma_{S^1}^\infty (\P^1)_+).
\]
For symmetric monoidal $\infty$-categories $\cC$ and $\cD$,
let $\Fun^{\rL,\otimes}(\cC,\cD)$ denote the $\infty$-category of colimit preserving symmetric monoidal functors.
By \cite[Proposition 2.9]{zbMATH06374152},
the induced functor
\[
\Fun^{\rL,\otimes}(\SH(k),\logSH(k))
\to
\Fun^{\rL,\otimes}(\SH_{S^1}(k),\logSH(k))
\]
is fully faithful and its essential image is spanned by the functors $\SH_{S^1}(k)\to \logSH(k)$ sending $\Sigma_{S^1}^\infty \P^1/1$ to a $\otimes$-invertible object of $\logSH(k)$.
Likewise,
the induced functor
\[
\Fun^{\rL,\otimes}(\logSH(k),\SH(k))
\to
\Fun^{\rL,\otimes}(\logSH_{S^1}(k),\SH(k))
\]
is fully faithful and its essential image is spanned by the functors $\logSH(k)\to \SH_{S^1}(k)$ sending $\Sigma_{S^1}^\infty \P^1/1$ to a $\otimes$-invertible object of $\SH(k)$.
With this universality in hand,
we obtain the adjoint functors
\[
\omega_\sharp
:
\logSH(k)
\rightleftarrows
\SH(k)
:
\omega^*.
\]
satisfying (1)--(4) since $\omega\colon \lSm/k\to \Sm/k$ and $c\colon \Sm/k\to \lSm/k[\Adm^{-1}]$ send $\P^1$ to $\P^1$.

As in the proof of Theorem \ref{omega.11},
we have the induced diagram
\[
\begin{tikzcd}
\logSH(k)
\ar[d,"L_{\A^1}"']\ar[r,"\omega_\sharp",shift left=0.5ex]\ar[r,leftarrow,"\omega^*"',shift right=0.5ex]&
\SH(k)
\\
(\A^1)^{-1}\logSH(k)
\ar[ru,"\omega_\sharp",shift left=0.5ex,bend right]\ar[ru,leftarrow,"\omega^*"',shift right=0.5ex,bend right]
\end{tikzcd}
\]
such that the two triangles commute.
Since \cite[Proposition 2.5.7]{logA1} also shows that the functor $\omega^*\colon \SH_{S^1}(k)\to (\A^1)^{-1}\logSH_{S^1}(k)$ is an equivalence of $\infty$-categories,
we have (5).
\end{proof}

Consider the \emph{logarithmic $K$-theory $\P^1$-spectrum}
\[
\logKGL:=\omega^*\KGL\in \logSH(k),
\]
where $\KGL$ is the $K$-theory $\P^1$-spectrum in $\SH(k)$ \cite[\S 6.2]{zbMATH01194164}.
The following is a direct consequence of \cite[Theorem 6.1.4]{logSH}.
Here, we provide an alternative streamlined proof:

\begin{cor}
\label{omega.13}
Let $k$ be a perfect field admitting resolution of singularities.
Then there is a natural isomorphism
\[
\logKGL
\simeq
\colim (\Sigma^\infty_{\P^1}(\Z\times \Gr) \to \Sigma_{\P^1}^{-1}\Sigma^\infty_{\P^1}(\Z\times \Gr) \to \Sigma_{\P^1}^{-2}\Sigma^\infty_{\P^1}(\Z\times \Gr) \to \cdots)
\]
in $\logSH(k)$,
where $\Gr$ is the infinite Grassmannian with trivial log structure.
\end{cor}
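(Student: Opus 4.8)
The plan is to obtain the statement by transporting the classical geometric model of $\KGL$ in $\SH(k)$ through the functor $\omega^*$ of Theorem~\ref{omega.6}, using only that $\omega^*$ is symmetric monoidal and preserves colimits (Theorem~\ref{omega.6}(4)). Recall from \cite[\S 6.2]{zbMATH01194164} that $\KGL$ is the $\P^1$-spectrum with each term $\Z\times\Gr$ (the infinite Grassmannian over $k$) and bonding maps the Bott maps arising from the Bott periodicity equivalence $\Z\times\Gr\simeq\Omega_{\P^1}(\Z\times\Gr)$; passing to the underlying object in $\SH(k)$ gives
\[
\KGL\simeq\colim\bigl(\Sigma_{\P^1}^\infty(\Z\times\Gr)\xrightarrow{\beta}\Sigma_{\P^1}^{-1}\Sigma_{\P^1}^\infty(\Z\times\Gr)\xrightarrow{\beta}\Sigma_{\P^1}^{-2}\Sigma_{\P^1}^\infty(\Z\times\Gr)\to\cdots\bigr),
\]
where $\beta$ denotes the (desuspended) Bott map. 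Applying $\omega^*$ and using colimit-preservation, $\logKGL=\omega^*\KGL\simeq\colim_n\omega^*\bigl(\Sigma_{\P^1}^{-n}\Sigma_{\P^1}^\infty(\Z\times\Gr)\bigr)$, so the problem reduces to two computations: that $\omega^*$ commutes with $\Sigma_{\P^1}^{-1}$, and that $\omega^*\Sigma_{\P^1}^\infty(\Z\times\Gr)$ is $\Sigma_{\P^1}^\infty(\Z\times\Gr)$ with trivial log structure.

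For the first, since $\omega^*$ is symmetric monoidal and, by the proof of Theorem~\ref{omega.6}, sends $\Sigma_{\P^1}^\infty\P^1/1$ to $\Sigma_{\P^1}^\infty\P^1/1$ — the $\otimes$-invertible object used to build $\logSH(k)$ — it carries the inverse of the former to the inverse of the latter, whence $\omega^*\Sigma_{\P^1}^{-n}(-)\simeq\Sigma_{\P^1}^{-n}\omega^*(-)$. For the second, I would write $\Z\times\Gr$ as the filtered colimit of the $\Z\times\Gr(d,N)$ along the standard closed immersions of finite Grassmannians, each $\Gr(d,N)$ being a smooth projective $k$-scheme; since $\Sigma_{\P^1}^\infty(-)_+$ and $\omega^*$ both preserve colimits it suffices to compute $\omega^*\Sigma_{\P^1}^\infty\Gr(d,N)_+$, which by Theorem~\ref{omega.6}(2) applied to $Y=(\Gr(d,N),\emptyset)\in\SmlSm/k$ (so that $Y-\partial Y=\Gr(d,N)$) equals $\Sigma_{\P^1}^\infty\Gr(d,N)_+$ with trivial log structure, naturally in the Grassmannian because the functor $c\colon\Sm/k\to\lSm/k[\Adm^{-1}]$ restricts on proper smooth $k$-schemes to the trivial-log-structure embedding. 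Taking the colimit identifies $\omega^*\Sigma_{\P^1}^\infty(\Z\times\Gr)$ with $\Sigma_{\P^1}^\infty(\Z\times\Gr)$ for $\Gr$ the infinite Grassmannian with trivial log structure; combining the two computations produces the asserted colimit presentation of $\logKGL$ in $\logSH(k)$, with transition maps the $\omega^*$-images of $\beta$.

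The step I expect to require the most care is checking that these transition maps are the geometric Bott maps on the log side, so that the presentation coincides with the one of \cite[Theorem~6.1.4]{logSH}; equivalently, that the identification $\omega^*\Sigma_{\P^1}^\infty(\Z\times\Gr)\simeq\Sigma_{\P^1}^\infty(\Z\times\Gr)$ carries $\omega^*\beta$ to $\beta$. The reason is that $\beta$ is assembled from the $E_\infty$-ring structure on $\Sigma_{\P^1}^\infty(\Z\times\Gr)_+$ coming from the $\oplus$-monoid structure on $\Z\times\Gr$, together with a Bott class, and both of these are pulled back from geometry along the trivial-log-structure functor $\Sm/k\to\lSm/k$; since $\omega^*$ is symmetric monoidal and, by Theorem~\ref{omega.6}(2), is compatible with this geometry on representables, it preserves the data defining $\beta$. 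If one only wants the bare existence statement the subtlety is moot, as the displayed $\omega^*$-images of the Bott maps are already legitimate transition maps realizing $\logKGL$ as the stated colimit. A remaining, entirely routine, point is that $\omega^*$ commutes with the two filtered colimits involved (over $n$ and over the finite Grassmannians), which is immediate from Theorem~\ref{omega.6}(4).
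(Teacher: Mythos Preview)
Your argument is correct and follows essentially the same route as the paper: both use the classical colimit presentation of $\KGL$ in $\SH(k)$, apply $\omega^*$, and invoke Theorem~\ref{omega.6}(2),(4) together with the fact that $\Gr$ is a filtered colimit of proper smooth schemes to identify $\omega^*\Sigma_{\P^1}^\infty(\Z\times\Gr)$ with $\Sigma_{\P^1}^\infty(\Z\times\Gr)$. The paper's proof is terser and does not spell out the compatibility with $\Sigma_{\P^1}^{-1}$ or the transition maps, whereas you make these points explicit; your additional discussion of the Bott maps is not needed for the bare statement (as you yourself note), but it does no harm.
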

\begin{proof}
Since $\Gr$ is a colimit of proper smooth schemes,
we have a natural isomorphism $\omega^* \Sigma_{\P^1}^\infty (\Z\times \Gr)\simeq \Sigma_{\P^1}^\infty (\Z\times \Gr)$ in $\logSH(k)$ by Theorem \ref{omega.6}(2),(4).
Using the description of $\KGL:=\mathbf{BGL}$ in \cite[\S 6.2]{zbMATH01194164},
we have an isomorphism
\[
\KGL
\simeq
\colim (\Sigma^\infty_{\P^1}(\Z\times \Gr) \to \Sigma_{\P^1}^{-1}\Sigma^\infty_{\P^1}(\Z\times \Gr) \to \Sigma_{\P^1}^{-2}\Sigma^\infty_{\P^1}(\Z\times \Gr) \to \cdots)
\]
in $\SH(k)$.
Apply $\omega^*$ to this isomorphism,
and use Theorem \ref{omega.6}(4) to conclude.
\end{proof}

\bibliography{bib}
\bibliographystyle{siam}

\end{document}